\def\mytitle{A Geometric Proof of the Structure Theorem for Cyclic
  Splittings of Free Groups}
\def\mykeywords{free group, virtually free group, group splitting,
  cyclic splittings of free groups}
\title{\mytitle}
\author{Christopher H. Cashen} 
\address{Christopher H. Cashen\newline \indent
Fakult\"at f\"ur Mathematik\newline \indent
Universit\"at Wien\newline \indent
1090 Vienna\\Austria}
\email{\href{mailto:cashenchris@gmail.com}{cashenchris@gmail.com}}
\urladdr{\href{http://www.mat.univie.ac.at/~cashen}{http://www.mat.univie.ac.at/~cashen}}
\keywords{\mykeywords}
\date{\today}
\thanks{I gratefully acknowledge support from grant
  ANR-2010-BLAN-116-01 GGAA and the European Research Council (ERC) grant of Goulnara
ARZHANTSEVA, grant agreement \textnumero 259527.}
\theoremstyle{plain}
\newtheorem{theorem}{Theorem}[section]
\newtheorem{lemma}{Lemma}[section]
\newtheorem{proposition}{Proposition}[section]
\newtheorem{corollary}{Corollary}[section]
\theoremstyle{remark}
\newtheorem*{remark}{Remark}
\theoremstyle{definition}
\newtheorem{definition}{Definition}[section]
\newtheorem{example}{Example}[section]
\def\makeautorefname#1#2{\expandafter\def\csname#1autorefname\endcsname{#2}}
\let\fullref\autoref
\let\c@lemma=\c@theorem 
\let\c@proposition=\c@theorem 
\let\c@corollary=\c@theorem 
\let\c@definition=\c@theorem 
\let\c@example=\c@theorem 
\DeclareMathOperator{\Aut}{Aut} 
\DeclareMathOperator{\Comm}{Comm}
\def\bdry{\partial}
\def\X{\mathcal{X}} 
\def\tree{\mathcal{T}} 
\def\2cc{2--connected components}
\newsavebox\myboxA
\newsavebox\myboxB
\newlength\mylenA
\newcommand*\xoverline[2][0.75]{%
    \sbox{\myboxA}{$\m@th#2$}%
    \setbox\myboxB\null
    \ht\myboxB=\ht\myboxA%
    \dp\myboxB=\dp\myboxA%
    \wd\myboxB=#1\wd\myboxA
    \sbox\myboxB{$\m@th\overline{\copy\myboxB}$}
    \setlength\mylenA{\the\wd\myboxA}
    \addtolength\mylenA{-\the\wd\myboxB}%
    \ifdim\wd\myboxB<\wd\myboxA%
       \rlap{\hskip 0.5\mylenA\usebox\myboxB}{\usebox\myboxA}%
    \else
        \hskip -0.5\mylenA\rlap{\usebox\myboxA}{\hskip 0.5\mylenA\usebox\myboxB}%
    \fi}
\newcommand{\inv}[1]{\xoverline{#1}}
\DeclareMathOperator{\Wh}{Wh}
\DeclareMathOperator{\rank}{rank}
\newcommand{\multimot}{\underline{w}}
\newcommand{\closure}[1]{\overline{#1}} 
\newlength{\figstandardheight}
\def\from{\colon\thinspace}
\begin{document}

\begin{abstract}
We give a geometric proof of a well known theorem that describes splittings of a free group as an amalgamated product or HNN extension over the integers. The argument generalizes to give a similar description of splittings of a virtually free group over a virtually cyclic group.
\end{abstract}

\maketitle




\section{Introduction}
This paper describes one-edge splittings of free groups
over (infinite) cyclic subgroups. 
Conversely, it describes when two free groups can be
amalgamated along a cyclic subgroup to form a free
group, or when an HNN--extension of a free group along a
cyclic subgroup is free.

\begin{theorem}[Shenitzer, Stallings, Swarup]\label{theorem:SSS}
Let $A$ and $B$ be finitely generated free groups, and let
$C$ be a cyclic group.
\begin{itemize}
\item $A*_C B$ is free if and only if one of the injections
  of $C$ into $A$ and $B$ maps $C$ onto a free factor of the vertex group.
\item $A*_C$ is free if and only if, up to $A$-conjugation,
  the edge injections map $C$ into independent free factors of $A$, and
  one of them is onto its factor.
\end{itemize}
\end{theorem}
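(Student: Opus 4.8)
The plan is to dispose of the degenerate case $C=1$ first, where $A*_C B = A*B$ and $A*_C = A*\langle t\rangle$ are always free and the trivial group trivially sits as a free factor; so assume $C$ is infinite cyclic, with generator $c$, and prove each biconditional one direction at a time. For the ``if'' direction I would argue by an explicit collapse. In the amalgam case, if $c$ maps to a primitive element of, say, $A$, write $A=\langle \phi_A(c)\rangle * A'$; identifying $\phi_A(C)$ with its image $\phi_B(C)\le B$ then makes the free generator $\phi_A(c)$ redundant, so $A*_C B\cong A'*B$ is a free product of free groups, hence free. In the HNN case, if $A=A_1*A_2*A_3$ with $\phi_1(C)=A_1=\langle\phi_1(c)\rangle$ and $\phi_2(C)\le A_2$ independent, then the relation $\phi_1(c)=t^{-1}\phi_2(c)\,t$ expresses the free generator $\phi_1(c)$ of $A_1$ via $t$ and $A_2$, so $A*_C\cong (A_2*A_3)*\langle t\rangle$ is free. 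Both are routine Tietze computations once the free-factor hypothesis is in hand.

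The substance is the ``only if'' direction, which I would make geometric as follows. Suppose $G$ (either $A*_C B$ or $A*_C$) is free; then $G$ acts freely and cocompactly on its Cayley tree $X$, with finite quotient core graph $\Gamma=X/G$. Each of the free subgroups $A,B,C$ has a unique minimal invariant subtree: $X_A,X_B$ for the vertex groups and the axis $\alpha$ of $c$, which is hyperbolic on $X$. Since $C$ lies in each adjacent vertex group, $\alpha\subseteq X_A\cap X_B$, and $X_A/A$, $X_B/B$ are core graphs carrying the immersed loops $\gamma_A,\gamma_B$ that represent $\phi_\bullet(c)$. To recover the splitting inside the free group I would compare $X$ with the Bass--Serre tree $T$, on which $c$ is \emph{elliptic}: build a $G$--equivariant simplicial map $f\colon X\to T$ and take the $G$--invariant track $M=f^{-1}(\text{edge midpoints of }T)$, a system of edge midpoints of $X$ with stabilizers conjugate to $C$; cutting $X$ along $M$ returns the vertex subtrees, and the finite image $\bar M\subset\Gamma$ encodes the edge of the splitting intrinsically.

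The heart of the matter is reading the free-factor condition off the tree geometry of how $\alpha$ threads through $X_A$ and $X_B$. Here I would invoke a folding (Whitehead-type) criterion: $\phi_A(C)$ is a free factor of $A$ exactly when $X_A/A$ can be reduced by folds and Whitehead moves so that $\gamma_A$ crosses a distinguished edge exactly once, equivalently when $\alpha$ carries an edge whose $A$--orbit meets $\alpha$ only in its $\langle c\rangle$--translates. The mechanism forcing this is that $T$ has a single edge orbit, so its pullback track is as thin as possible; analyzing the places where the axis $\alpha$ enters and leaves the two vertex subtrees along $M$ should force such a once-crossed edge to exist on the $A$--side or the $B$--side, yielding primitivity there. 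In the HNN case the analogous track is \emph{non-separating} in $\Gamma=X/G$, since both ends of the edge attach to the same vertex group; this is precisely the geometric origin of the requirement that the two images $\phi_1(C),\phi_2(C)$ land in \emph{independent} free factors rather than in unrelated factors of two distinct vertex groups.

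I expect the main obstacle to be this last step: faithfully translating the algebraic notion of primitivity into tree combinatorics, and localizing the once-crossed edge to a single side. The difficulty is that primitivity is genuinely subtle and is \emph{not} a homological condition. Indeed, a Mayer--Vietoris computation on the aspherical graph of spaces modelling $G$ gives only the weaker necessary condition $H_2=0\iff([\gamma_A],[\gamma_B])\neq(0,0)$, i.e.\ that $\gamma_A$ or $\gamma_B$ be non-nullhomologous; the example $\phi_A(c)=a_1^2,\ \phi_B(c)=b_1^2$ has $H_2=0$ yet $\langle a_1,b_1\mid a_1^2=b_1^2\rangle$ is not free, so homology cannot see the theorem. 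The argument must therefore exploit the actual combinatorics of how the axis folds through the core graph—a geometric avatar of Whitehead's lemma—and controlling that folding $G$--equivariantly is where the real work lies.
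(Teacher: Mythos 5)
Your ``if'' direction is fine: the Tietze/collapse computations are the standard argument and match what the paper sketches. The problem is the ``only if'' direction, which is the entire content of the theorem, and the gap sits exactly where you flag it. After constructing the equivariant map $f\colon X\to T$ and the track $M=f^{-1}(\text{midpoints})$, you assert that since $T$ has a single edge orbit, analyzing how the axis $\alpha$ meets $M$ ``should force'' an edge of $X_A/A$ or $X_B/B$ that $\gamma_A$ or $\gamma_B$ crosses exactly once (possibly after Whitehead moves). No mechanism is supplied, and none falls out of the setup: cutting a tree with free $G$-action along a track dual to $T$ just reproduces the splitting, and says nothing by itself about how the axis of $c$ sits inside a vertex subtree. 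The equivalence you invoke (``free factor $\iff$ a once-crossed edge exists after folds and Whitehead moves'') is correct, but the direction you need is that \emph{freeness of $G$} produces such an edge on one side, and that implication is precisely the Shenitzer--Stallings--Swarup theorem. Your final paragraph concedes that controlling the folding equivariantly ``is where the real work lies''; since that work is absent, what you have is a credible program (essentially the skeleton of Stallings' folding proof) rather than a proof. The HNN case has the same gap compounded: noting that the track is non-separating in $\Gamma$ does not yield independence of the two factors, surjectivity of one edge map, or the ``up to $A$-conjugation'' clause, none of which are addressed.

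It is worth contrasting with how the paper sidesteps this difficulty: it proves the contrapositive coarsely. If neither edge injection is onto a free factor, then minimal Whitehead graphs for the edge words have 2-connected components; splicing propagates this to generalized Whitehead graphs over arbitrary balls, which lets one repeatedly push a quasi-geodesic in the Bass--Serre complex around any ball $N_i(\gamma(0))$ --- horizontal detours supplied by 2-connectivity, vertical detours across duplicated edge strips when the word is divisible. This violates the bottleneck property, so the group is not a quasi-tree, hence not even virtually free. That replaces the delicate equivariant folding control (which your plan leaves open) with elementary graph connectivity plus the geometric characterization of virtually free groups, and it is what allows the paper's generalization to virtually free groups with virtually cyclic edge groups. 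If you want to complete your route instead, you must actually prove the forcing step --- for instance by factoring an equivariant morphism from a free $G$-tree to $T$ into folds and analyzing the final folds, as Stallings does --- rather than deriving it from the track's existence.
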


This theorem is well known.
The amalgamated product case is a theorem of Shenitzer
\cite{She55}. 
The HNN case follows from a theorem of Swarup \cite{Swa86}, who proves
a more general theorem for splittings of free groups over free
subgroups. 
Swarup
attributes the case of cyclic splittings to Stallings.
A published version of Stallings' proof appears later \cite{Sta88}. 
A simple topological proof appears in an
unpublished paper of Bestvina and Feighn \cite[Lemma 4.1]{BesFei93}. Generalized versions appear in work of Louder \cite{Lou06} and Diao
and Feighn \cite{DiaFei05}.

We prove the theorem geometrically by showing that if the conditions are not satisfied then
the ``obvious'' geometric model for the group is not a quasi-tree, so
the group is not even
virtually free.
This proof generalizes to virtually
free groups:

\begin{theorem}\label{theorem:vfree}
Let $A$ and $B$ be finitely generated virtually free groups, and let
$C$ be a virtually cyclic group.
\begin{itemize}
\item $A*_C B$ is virtually free if and only if one of the injections
  of $C$ into $A$ and $B$ maps $C$ onto a factor of the vertex group.
\item $A*_C$ is virtually free if and only if, up to $A$-conjugation,
  the edge injections map $C$ into independent factors of $A$, and
  one of them is onto its factor.
\end{itemize}
\end{theorem}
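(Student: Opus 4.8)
The plan is to prove \autoref{theorem:vfree} by coarsening the geometric proof of \autoref{theorem:SSS}: trees become quasi-trees and lines become quasi-lines, but the logical structure is unchanged. The key input is that a finitely generated group is virtually free if and only if it is quasi-isometric to a tree, i.e.\ if and only if it is a quasi-tree, together with Manning's bottleneck criterion for detecting quasi-trees: a geodesic space is a quasi-tree precisely when there is a uniform $\Delta$ such that, for all $x$ and $y$, every path from $x$ to $y$ meets the $\Delta$-neighbourhood of some geodesic midpoint of $x$ and $y$. Given this, I would build the obvious geometric model $X$ of the splitting: over the Bass--Serre tree $T$ I place a copy of a Cayley graph of the relevant vertex group at each vertex and a copy of a Cayley graph of $C$ at each edge, glued by the edge injections. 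Since $A$ and $B$ are virtually free the vertex spaces are quasi-trees, since $C$ is virtually cyclic the edge spaces are quasi-lines, and the group acts geometrically on $X$; hence $A*_C B$ (resp.\ $A*_C$) is virtually free if and only if $X$ is a quasi-tree, and everything reduces to deciding when $X$ satisfies the bottleneck criterion.

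For the forward (``if'') direction I would argue algebraically, exactly as in \autoref{theorem:SSS}. If an edge injection carries $C$ onto a factor of a vertex group, say $B = C * B'$, then the edge group is absorbed and $A*_C B \cong A * B'$; in the HNN case the hypothesis that the two edge images lie in independent factors with one of them onto lets me fold the stable letter across that factor and rewrite $A*_C$ with no essential edge group, as a free product or as an HNN extension over a finite group. In every case the result is built from finitely generated virtually free groups by free products and finite amalgamations, and such groups are virtually free because the trees on which the pieces act cocompactly with finite stabilizers assemble into a single tree on which the whole group acts the same way. This is the straightforward half of the theorem.

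The substance is the reverse direction: assuming neither injection is onto a factor, I must show $X$ fails the bottleneck criterion, i.e.\ that arbitrarily large balls can be routed around. Fix an edge quasi-line $\ell \subset X$; it coarsely separates $X$ into an $A$-side $X_A$ and a $B$-side $X_B$, and I take $x \in X_A$, $y \in X_B$ deep on their respective sides, so that a geodesic from $x$ to $y$ has a midpoint $m$ on $\ell$. To avoid a ball $B(m,\Delta)$ a path from $x$ to $y$ must cross $\ell$ at some point $m'$ far from $m$, and to reach such an $m'$ it must \emph{displace along $\ell$}; since each vertex space is coarsely a tree, displacement cannot happen inside a single vertex space and must instead proceed by hopping through neighbouring vertex spaces along other edge quasi-lines. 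The decisive local lemma is that this displacement is possible within $X_A$ if and only if $C$ is \emph{not} a factor of $A$: the obstruction is whether the pattern of translates of $\ell$ inside the vertex quasi-tree is self-crossing, which I would detect through the connectivity of the associated Whitehead graph (equivalently, the space of ends of the complement of the pattern), and being a factor is exactly the non-crossing case in which $\ell$ is capped off and carries a genuine bottleneck. Granting the lemma for both sides, displacement is available in $X_A$ and in $X_B$ simultaneously, and I would splice the two displacements into a single path from $x$ to $y$ that avoids $B(m,\Delta)$; since $\Delta$ was arbitrary, $X$ is not a quasi-tree. The HNN case is identical, with the roles of the two sides played by the two edge images of $C$, so that ``independent factors with one onto'' is the precise condition under which the pattern is capped. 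Proving this local crossing/capping lemma, uniformly across all scales, and assembling the two displacements into one admissible detour, is the main obstacle.

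Finally I would check what changes in passing from free to virtually free groups, and the answer is: only the bookkeeping. Both the bottleneck property and quasi-tree-ness are quasi-isometry invariants, so all of the above is coarse geometry with controlled constants. Concretely I would use that a finitely generated virtually free group acts cocompactly on a tree with finite vertex stabilizers, so each vertex space is uniformly quasi-isometric to an honest tree and the crossing/capping analysis of the edge pattern reduces to the free-group picture up to the quasi-isometry constants; and I would read ``factor'' as a free-product factor throughout, verifying that the algebraic absorption in the ``if'' direction and the crossing lemma in the ``only if'' direction both persist in this coarser setting. Modulo this, the geometric proof of \autoref{theorem:SSS} transfers to \autoref{theorem:vfree} with no genuinely new ideas.
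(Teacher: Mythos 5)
Your geometric skeleton (bottleneck criterion plus a detour construction driven by Whitehead-graph connectivity) is the paper's approach for the torsion-free core, but your reduction of the virtually free case to ``bookkeeping'' hides two genuine gaps. The first is that you announce you will ``read \emph{factor} as a free-product factor throughout,'' and this is not the theorem's notion: here a factor of $G$ is an infinite subgroup occurring as a \emph{vertex group of a graph-of-groups decomposition of $G$ with finite edge groups}. The two notions really differ in the presence of torsion, and with your reading both directions break. In the ``if'' direction, absorption via $B=C*B'$ misses the case where $C$ is a vertex group of a splitting of $B$ over a nontrivial finite group (the paper's argument instead collapses the $C$-edge of such a decomposition); in the ``only if'' direction you would be trying to prove a false statement, since $A*_CB$ is virtually free whenever $C$ is a vertex group of a splitting of $B$ over a finite group, even when $C$ is not a free-product factor of $B$.

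The second gap is precisely what the paper flags as ``the main work'': making the crossing/capping dichotomy meaningful when the vertex groups have torsion. Two things are needed that quasi-isometry invariance alone does not give. (a) $C$ need not be almost malnormal in $A$; the paper handles this via the commensurator $\mathrm{Comm}_A(C)$, using the Kapovich--Short theorem that a quasi-convex subgroup has finite index in its commensurator, so that when $C\neq\mathrm{Comm}_A(C)$ several edge strips attach to each line and one can push vertically. (b) When $C$ is almost malnormal, its ``pattern'' in a finite-index normal free subgroup $A'\leq A$ is not a single word but a \emph{multiword} $\{d_iwd_i^{-1}\}$ indexed by double cosets, and one must prove the dichotomy: $C$ is a factor of $A$ if and only if this multiword is basic (minimal Whitehead graphs are unions of isolated edges), and otherwise every minimal Whitehead graph has $2$-connected components. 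The paper proves this by identifying the decomposition space of $(A,C)$ (the boundary of the cusped space of Bowditch/Groves--Manning) with that of the lifted multiword and characterizing factors by total disconnectedness of this space; nothing in your sketch supplies an argument for this equivalence, and it is not a formal consequence of $A'$ being quasi-isometric to $A$, since being a factor is an algebraic condition on the pair $(A,C)$. The example $\mathbb{Z}/2\mathbb{Z}*\mathbb{Z}/3\mathbb{Z}$, which is virtually free yet has no virtually cyclic factors at all, shows how far the torsion case is from ``the free-group picture up to constants.'' The same machinery (the independent-factors criterion in terms of decomposition-space components) is also what you would need to justify the HNN case you declare ``identical.''
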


We call an infinite subgroup $H$ of a group $G$ a \emph{factor} if $H$ is a
vertex group in a graph of groups decomposition of $G$ with
finite edge groups, and we call two factors \emph{independent} if they are the vertex
groups in the same graph of groups decomposition of $G$
with finite edge groups.

\fullref{theorem:vfree} can also be derived from more general
machinery for hyperbolic-elliptic splittings, for example,
\cite[Theorem 7.2]{DiaFei05}. The proof given here is different.

The ``if'' direction of the theorem is easy.
In the HNN case it may be necessary to change the stable letter to
account for the $A$-conjugation, but this is an isomorphism.
Now, replace the appropriate vertex group by a graph of groups in
which $C$ is a factor.
The stabilizer of the $C$-edge is mapped isomorphically to the stabilizer of
one of its end vertices, so this edge can be collapsed to give a graph of virtually free groups with finite
edge groups, and such a group is virtually free.

The theorem says that reversing this edge collapse move is the only
way to create a virtually free group as an amalgam over a virtually
cyclic subgroup.

First we prove the torsion free case. 
There are three ideas:
\begin{enumerate}
\item 
If a maximal cyclic subgroup of a free group is  not a
  free factor then Whitehead graphs for a generator of the subgroup
  are highly connected, in a certain sense.
\item The Whitehead graphs for the images of the amalgamated subgroup
  in the vertex groups record the intersection patterns of edge spaces
  in Bass-Serre complexes corresponding to the splitting.
\item High connectivity in the Whitehead graphs allows us to build
  sequences of
  paths in the Bass-Serre complex that connect the endpoints at infinity of some edge space but
  avoid arbitrarily large balls.
Thus, there are distinct points in the boundary at infinity that lie
in the same end of the group, but this does not happen in virtually
free groups.
\end{enumerate}

The proof relies, of course, on the important fact that virtually free
groups can be characterized geometrically.
Otherwise it is quite easy, using standard constructions of Whitehead
graphs and Bass-Serre complexes and elementary arguments.

The proof in the general case is similar, with the Bass-Serre complexes
coarsened.
The main work is to characterize factors in virtually free groups and
show the analog of item (1), which is done in 
\fullref{sec:factors}.

\medskip 

Gilbert Levitt has pointed out that some virtually free groups do not
have any virtually cyclic factors according to our definition.
An example is $\mathbb{Z}/2\mathbb{Z}*\mathbb{Z}/3\mathbb{Z}$, see \fullref{ex}.
Consequently, no HNN extension
of $\mathbb{Z}/2\mathbb{Z}*\mathbb{Z}/3\mathbb{Z}$ over a virtually
cyclic group  is ever virtually
free, nor is any  amalgam of two copies of
$\mathbb{Z}/2\mathbb{Z}*\mathbb{Z}/3\mathbb{Z}$ over a virtually
cyclic group.
Conversely, no non-trivial splitting of a virtually free group over a
virtually cyclic subgroup ever has
$\mathbb{Z}/2\mathbb{Z}*\mathbb{Z}/3\mathbb{Z}$ as a vertex group.

\section{Preliminaries}
\subsection{Free Groups and Whitehead Graphs}

Let $F=F_n$ be a finite rank free group.
A \emph{multiword} $\multimot$ is a finite list of words in $F$.
A free generating set $\mathcal{B}=\{b_1,\dots, b_n\}$ is called a \emph{basis}.
A multiword $\multimot=\{w_1,\dots, w_k\}$ is \emph{basic} if there
exist elements $f_i\in F$ such that $\{\inv{f}_iw_if_i\}$ is a subset
of a basis.
An element is \emph{indivisible} if it is not a proper power of
another element.
Basic elements are often called \emph{primitive} in the literature.

Let $|g|_{\mathcal{B}}$ denote the word length of an element $g$ with
respect to the basis $\mathcal{B}$.
Let $|[g]|_\mathcal{B}$ denote the minimum $\mathcal{B}$-length of a
conjugate of $g$.

The Whitehead graph $\Wh_{\mathcal{B}}(*)\{w\}$ of an indivisible,
cyclically reduced word $w\in F$
with respect to a basis $\mathcal{B}$ is a graph with vertex set in bijection with the set
$\mathcal{B}\cup\inv{\mathcal{B}}$ of generators and their inverses.
An edge is added from vertex $\inv{x}$ to vertex $y$ for each
occurrence of $xy$ as a subword of $w$ written as a reduced cyclic
word in the letters $\mathcal{B}\cup\inv{\mathcal{B}}$.

We can similarly define a Whitehead graph for a finite list of words
$\multimot$. 
We will be interested in the conjugacy classes of maximal cyclic
subgroups containing the words of $\multimot$.
Thus, to define $\Wh_{\mathcal{B}}(*)\{\multimot\}$ we choose a minimal set of
indivisible, cyclically reduced words $\underline{v}=\{v_i\}$ so that
each $w_i\in\multimot$ is conjugate into some $\left<v_j\right>$.
Then add edges as above for each $v_j$.
The graph constructed is independent of the choice of the $v_j$'s.

Whitehead's Algorithm \cite{Whi36} produces a point in the $\Aut(F)$ orbit of $w$
of minimal $\mathcal{B}$--length. 
An equivalent formulation for multiwords is that it chooses a basis $\mathcal{B}$ with respect to which
$\Wh_{\mathcal{B}}(*)\{\multimot\}$ has the minimal number of edges.

The Whitehead graphs we deal with will not always be connected, so we
make the following definitions:
\begin{definition}
A \emph{cut point} of a graph is a point such that deleting it
creates more connected components. A \emph{cut vertex} is a vertex
that is a cut point.   
\end{definition}

\begin{definition}
  We say a graph \emph{has 2-connected components} if every connected
  component is 2--connected.
\end{definition}

A special case of Menger's Theorem \cite{Men27} says a graph
without cut points has \2cc.

The next lemmas are easy exercises with Whitehead's Algorithm:
\begin{lemma}
  A Whitehead graph with a cut vertex is not minimal.
\end{lemma}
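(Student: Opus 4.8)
The plan is to show that a basis whose Whitehead graph $\Gamma=\Wh_{\mathcal{B}}(*)\{\multimot\}$ has a cut vertex cannot realize the minimum number of edges, by producing a Whitehead automorphism of the second kind that strictly lowers the edge count (equivalently, the total cyclic $\mathcal{B}$--length of the defining words). Recall that such an automorphism is given by a multiplier $a\in\mathcal{B}\cup\inv{\mathcal{B}}$ and a set $A\subseteq\mathcal{B}\cup\inv{\mathcal{B}}$ with $a\in A$ and $\inv{a}\notin A$: it fixes $a$ and $\inv{a}$ and multiplies every other generator by $a^{\pm1}$ on the side(s) dictated by membership in $A$. Its effect on a cyclic word is to splice a power of $a$ into each turn and then reduce freely and cyclically, and Whitehead's length-change formula reads off the net change from how the edges of $\Gamma$ meet $A$ and the multiplier. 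So the whole task is to use the cut vertex to choose $(A,a)$ with negative net change.

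Let $v$ be the cut vertex, and work in its connected component $\Gamma_{0}$; the remaining components are left alone and contribute nothing to the change. Deleting $v$ splits $\Gamma_{0}$ into at least two pieces, and $\inv{v}$ lies in at most one of them. I would let $C$ be the union of all the pieces that do not contain $\inv{v}$, so that $C\neq\emptyset$ and neither $v$ nor $\inv{v}$ lies in $C$, and take multiplier $a=\inv{v}$ with $A=\{\inv{v}\}\cup C$; this is admissible because the excluded letter $v$ is not in $A$. Concretely this folds the letters of $C$ across $\inv{v}$. (The mirror choice, multiplier $v$ together with the pieces on the other side, works symmetrically.)

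The cut-vertex hypothesis is exactly what forces the net change to be negative. Because $v$ separates $C$ from the rest of $\Gamma_{0}$, every edge leaving $C$ ends at $v$; and since $\Gamma_{0}$ is connected, $v$ is joined to each piece of $C$, so there is at least one such edge. In the length-change formula these are the turns that the fold destroys, while the cut structure limits the turns the fold can create, so after the bookkeeping one is left with a strictly negative total. Hence the automorphism yields a basis whose Whitehead graph has strictly fewer edges, and $\Gamma$ was not minimal.

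The hard part is precisely this last bookkeeping around the multiplier $\inv{v}$ and its inverse $v$. These two generators are fixed by the automorphism, but their occurrences in the cyclic words interact with the spliced-in powers of $\inv{v}$, and it is these interactions that supply the subtle terms in Whitehead's formula; getting the sign right means tracking exactly which turns at $v$ and $\inv{v}$ are removed, which are introduced, and which are merely displaced. Once the accounting is arranged so that the edges from $v$ into $C$ are weighed against what the fold creates, connectivity delivers the strict decrease, which is why this is an easy exercise with Whitehead's algorithm.
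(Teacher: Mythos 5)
Your overall strategy --- turn the cut vertex into a Whitehead automorphism of the second kind that strictly decreases the number of edges --- is the intended one (the paper offers no proof, calling these lemmas easy exercises with Whitehead's algorithm), but your specific choice of $(A,a)$ is wrong, and the ``bookkeeping'' you defer is precisely where it breaks. With the paper's edge convention (an edge joining $\inv{x}$ to $y$ for each subword $xy$) and the automorphism $(A,a)$ acting by $x\mapsto ax$ for $x\in A$, $\inv{x}\notin A$ (and $x\mapsto ax\inv{a}$ when both lie in $A$), the length change is $E(A,A^{c})-\deg(a)$, where $E(A,A^{c})$ counts edges crossing the partition and $\deg(a)$ is the valence of the multiplier. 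For your choice $a=\inv{v}$, $A=\{\inv{v}\}\cup C$: every edge leaving $C$ ends at $v$, and $v\in A^{c}$, so those $E(v,C)\geq 1$ edges are \emph{crossing} edges; moreover no edge joins $\inv{v}$ to $C$ (again because all edges leaving $C$ end at $v$), so all of $\deg(\inv{v})$ is also crossing. The change is $E(v,C)+\deg(\inv{v})-\deg(\inv{v})=+E(v,C)\geq 1$: your fold strictly \emph{increases} the edge count. Concretely, take $w=xy\inv{x}z$ replaced by its cyclic conjugate, or simply $w=xyxz$ in $F_{3}$: its Whitehead graph has components $\{y,\inv{x},z\}$ and $\{x,\inv{y},\inv{z}\}$, cut vertex $v=\inv{x}$, $C=\{y,z\}$; your automorphism has multiplier $x$ and sends $y\mapsto xy$ (or $yx$) and $z\mapsto xz$ (or $zx$), taking $w$ to the cyclic word $x^{2}yx^{2}z$ of length $6>4$, under either side convention. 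Your parenthetical ``mirror choice'' also fails: the piece on the other side contains $\inv{v}$, so $A$ would contain both $v$ and $\inv{v}$, which is inadmissible for a Whitehead automorphism.

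The correct move keeps the cut vertex itself as multiplier on the same side: $a=v$, $A=\{v\}\cup C$ (equivalently, multiplier $\inv{v}$ with $A=\{\inv{v}\}\cup\inv{C}$ in the right-multiplication convention --- note $\inv{C}$, the set of inverses of letters of $C$, not $C$; Whitehead graphs are generally not invariant under inverting vertex labels, so this distinction is real). Then every edge leaving $C$ ends at $v\in A$, so $E(A,A^{c})=\deg(v)-E(v,C)$ and the change is $-E(v,C)\leq -1$, with $E(v,C)\geq 1$ because connectivity of $\Gamma_{0}$ forces each piece of $C$ to be joined to $v$. In the example this sends $xyxz$ to $yz$. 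So while your decomposition of the graph is right, the proof as written establishes nothing: the decisive computation is asserted rather than performed (``after the bookkeeping one is left with a strictly negative total''), and when one actually carries it out for your $(A,a)$, the edges from $C$ to $v$ turn out to be turns the fold \emph{creates}, not destroys, and the sign comes out positive.
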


\begin{lemma}
  A Whitehead graph with a valence one vertex labeled $x$ is not
  minimal unless $x$ and $\inv{x}$ are joined by an isolated edge.
\end{lemma}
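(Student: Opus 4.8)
The plan is to read the valence of a vertex directly off the cyclically reduced representative words and then exhibit a single length-reducing Whitehead move. First I would record the bookkeeping identity for valences. A vertex $x$ is an endpoint of exactly those edges coming from subwords $u\,x$ (one for each occurrence of the letter $x$, giving the edge $\inv u$--$x$) and from subwords $\inv x\,v$ (one for each occurrence of the letter $\inv x$, giving the edge $x$--$v$). Since the representatives $\underline v$ are cyclically reduced, the subword $\inv x\,x$ never occurs, so no edge is counted twice; hence the valence of $x$ equals the total number of occurrences of $x$ and of $\inv x$ in $\underline v$. In particular, valence one forces exactly one of the letters $x,\inv x$ to occur, and to occur exactly once. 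After replacing $x$ by $\inv x$ if necessary, I may assume the letter $x$ occurs once and $\inv x$ not at all.

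Next I would locate this unique occurrence inside some representative $v_j$, writing its reduced cyclic word as $\cdots u\,x\,v\cdots$. Cyclic reducedness together with ``$x$ occurs once, $\inv x$ never'' forces $u,v\notin\{x,\inv x\}$, except in the degenerate case $v_j=x$ of length one. That degenerate case is exactly the exception in the statement: the length-one cyclic word $x$ has only the cyclic subword $x\,x$, hence contributes the single edge joining $x$ to $\inv x$, and this edge is isolated because both $x$ and $\inv x$ then have valence one. I would also check that this is the only way an $x$--$\inv x$ edge can touch a valence-one vertex: any such edge otherwise arises from a subword $x^2$ or $\inv x^2$, which would make $x^{\pm}$ occur at least twice and raise the valence above one.

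In the remaining (non-degenerate) case I would apply the elementary Whitehead automorphism $\phi$ determined by $\phi(x)=\inv u\,x$ and $\phi(g)=g$ for every other generator $g$; this is a legitimate Whitehead automorphism of the second kind with multiplier $\inv u$, since $u\notin\{x,\inv x\}$. Because $\inv x$ never occurs and $x$ occurs only once, the sole effect of $\phi$ on $\underline v$ is to turn the subword $u\,x\,v$ into $u\,\inv u\,x\,v=x\,v$, a single cancellation with no cascade (the resulting $x\,v$ and the junction at the letter preceding $u$ remain reduced, as $\inv x$ is absent). Thus the total cyclic length of $\underline v$, which equals the number of edges of the Whitehead graph, drops by exactly one, so $\Wh_{\mathcal B}(*)\{\multimot\}$ does not realize the minimum and is not minimal.

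The valence count and the verification that $\phi$ is a genuine Whitehead automorphism with the stated effect are routine. The one place to be careful is the dichotomy in the second paragraph: pinning down that the isolated edge joining $x$ and $\inv x$ occurs precisely when $v_j$ has length one, so that the exception in the statement matches exactly the configuration in which the length-reducing move in the third paragraph is unavailable.
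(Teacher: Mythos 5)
Your proof is correct and is exactly the argument the paper intends: it states this lemma without proof as an ``easy exercise with Whitehead's Algorithm,'' and your solution---counting valence via occurrences of $x^{\pm 1}$ in the cyclically reduced representatives, isolating the length-one exceptional case as the isolated $x$--$\inv{x}$ edge, and otherwise exhibiting the length-reducing Nielsen/Whitehead move $x\mapsto \inv{u}x$---is the standard way to carry out that exercise. (Only a cosmetic quibble: under the usual convention your $\phi$ is the Whitehead automorphism $(\{u,\inv{x}\},u)$ with multiplier $u$ rather than $\inv{u}$, but this does not affect the argument.)
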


\begin{lemma}\label{lemma:loop}
Every non-trivial component of a minimal Whitehead graph is either 2--connected or
an isolated edge joining a vertex to its inverse.
\end{lemma}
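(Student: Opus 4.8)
The plan is to combine the two preceding lemmas with the cited special case of Menger's Theorem, exploiting the distinction the paper has carefully drawn between a \emph{cut vertex} and a \emph{cut point} lying in the interior of an edge. Fix a minimal Whitehead graph and let $K$ be one of its non-trivial components, i.e. one containing at least one edge. I want to show that if $K$ fails to be 2--connected then it must be an isolated edge joining a vertex to its inverse.

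First I would suppose $K$ is not 2--connected. Applying the special case of Menger's Theorem to $K$ (a connected graph in its own right), the contrapositive says that $K$ must contain a cut point. Since the graph is minimal, the cut-vertex lemma tells us it has no cut vertex, so neither does $K$; hence the cut point cannot be a vertex and must lie in the interior of some edge $e$ with endpoints $u$ and $v$. Deleting that interior point separates the half-edge at $u$ from the half-edge at $v$, so there is no path from $u$ to $v$ in $K$ that avoids $e$; that is, $e$ is a bridge.

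Next I would show a bridge forces $K$ to be a single edge. If one endpoint, say $u$, had valence at least two, it would carry a second edge to some vertex $w$; because $e$ is a bridge, $w$ cannot equal $v$ and must lie on the side of $e$ opposite to $v$, so every $v$--$w$ path runs through $e$ and hence through $u$. Deleting the \emph{vertex} $u$ would then separate $v$ from $w$, exhibiting $u$ as a cut vertex and contradicting minimality via the cut-vertex lemma. Therefore both $u$ and $v$ have valence one and $K$ is exactly the edge $e$. Finally, applying the valence-one lemma to the valence-one vertex $u$, labeled by some generator $x$, minimality forces the isolated edge at $u$ to join $x$ to $\inv{x}$; since that isolated edge is $e$, we get $v=\inv{u}$, so $K$ is an isolated edge joining a vertex to its inverse.

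The only delicate point—and the step I would be most careful to spell out—is the bookkeeping translating the purely topological cut point supplied by Menger's Theorem into the combinatorial cut vertex controlled by the preceding lemma. Everything turns on the two observations that a non-vertex cut point is precisely a bridge, and that a bridge with a valence-$\geq 2$ endpoint manufactures an honest cut vertex. Once those are in hand the two lemmas dispatch the two cases with no further computation, so I expect no genuine obstacle beyond stating this reduction cleanly.
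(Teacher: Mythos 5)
Your proof is correct and takes exactly the route the paper intends: the paper states this lemma without proof, as an easy exercise, having just assembled precisely the ingredients you use (the cut-vertex lemma, the valence-one lemma, and the quoted special case of Menger's theorem). Your bridge argument---a non-vertex cut point forces a bridge, and a bridge with a valence-$\geq 2$ endpoint produces a cut vertex---is a correct filling-in of the details the paper leaves to the reader.
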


\begin{lemma}\label{lemma:onecomponentperword}
Each word in a multiword contributes edges to only one component of a
minimal Whitehead graph.
\end{lemma}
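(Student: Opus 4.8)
The plan is to reduce the statement to a claim about a single cyclically reduced word and then rule out the bad configuration using minimality. First I note that if $w_i$ is a word of the multiword, conjugate into $\langle v_j\rangle$, then the cyclic reduction of $w_i$ is a power of $v_j$ and so contributes exactly the same edges to $\Wh_{\mathcal{B}}(*)\{\multimot\}$ as $v_j$ does. Thus it suffices to show that for each indivisible, cyclically reduced word $v=a_1\cdots a_m$ used to build the Whitehead graph, all of the edges $e_i=\{\inv{a_i},a_{i+1}\}$ (indices read cyclically) lie in a single connected component. Call a letter $x\in\mathcal{B}\cup\inv{\mathcal{B}}$ \emph{balanced} if $x$ and $\inv{x}$ lie in the same component. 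The argument then splits into two steps.

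\emph{Step 1 (a chain argument).} I would first show that if every letter occurring in $v$ is balanced, then the edges of $v$ lie in one component. Both endpoints of $e_i$ lie in a common component $D_i$; in particular $a_{i+1}\in D_i$, while $\inv{a_{i+1}}\in D_{i+1}$. Since $a_{i+1}$ is balanced, $a_{i+1}$ and $\inv{a_{i+1}}$ lie in the same component, forcing $D_i=D_{i+1}$. Running around the cyclic word gives $D_1=\cdots=D_m$, which is the desired conclusion.

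\emph{Step 2 (minimality).} It remains to show that in a minimal Whitehead graph every letter occurring in $\multimot$ is balanced. Suppose instead that some letter $b$ occurring in $\multimot$ is unbalanced, so $\inv{b}$ lies in a component $C'$ distinct from the component $C$ of $b$. Because $b$ occurs it is an endpoint of some $e_i$, hence has positive valence (and, as $v$ is reduced, there are no loops, so $|C|\geq 2$). Put $A=(\mathcal{B}\cup\inv{\mathcal{B}})\setminus C'$, so that $b\in A$ and $\inv{b}\notin A$, and consider the Whitehead automorphism $(A,b)$; the conditions just noted make this a genuine, nontrivial move. Since $C'$ is an entire connected component, \emph{no} edge of the Whitehead graph joins $A$ to its complement. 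The standard computation of the effect of a Whitehead move on length—the same mechanism underlying the cut-vertex and valence-one lemmas above—shows that the number of edges changes by the number of edges crossing the cut minus the valence of the multiplier, here $0-\deg(b)<0$. This contradicts minimality, so every occurring letter is balanced, and Step 1 completes the proof.

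The main obstacle is Step 2: one must arrange the Whitehead automorphism so that the induced cut has no crossing edges, which is exactly why $C'$ must be taken to be a full component rather than an arbitrary vertex set, and one must verify that the multiplier $b$ has positive valence—true precisely because $b$ occurs in the multiword. It is worth keeping in mind that a single component may perfectly well contain both a generator and its inverse, as happens for the isolated inverse-edges of \fullref{lemma:loop}; the content of the lemma is that, once the graph is minimal, \emph{every} letter occurring in a given word is balanced, so that word cannot straddle two components.
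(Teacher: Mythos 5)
Your proof is correct. The paper actually gives no proof of this lemma at all—it is listed among the statements described as ``easy exercises with Whitehead's Algorithm''—and your argument is a complete and correct such exercise, in exactly the intended spirit: Step 1 (the chain argument around the cyclic word via ``balanced'' letters) is airtight, and Step 2 applies the Whitehead automorphism $(A,b)$ with $\bar{A}=C'$ a full component, so that the standard length-change formula (change $=$ edges crossing the cut minus the valence of the multiplier, the same computation behind the cut-vertex and valence-one lemmas) yields $0-\deg(b)<0$, contradicting minimality. The only ingredient you quote rather than prove is that formula, which is standard peak-reduction material and is precisely the level of background the paper assumes for these lemmas.
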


For a fixed basis $\mathcal{B}$, the Cayley graph of $F$ with respect
to $\mathcal{B}$ is a tree $\tree$. 
The Whitehead graph can be generalized
to a Whitehead graph $\Wh_\mathcal{B}(\X)\{w\}$ over a compact subtree
$\X$ of $\tree$.
The vertex set is indexed by the elements of $F$ that are adjacent to
$\X$ in $\tree$.
Vertices labeled $u$ and $v$ are connected by an edge for each
$w$--orbit of  $\inv{u}v$ as a subword of some power of $w$.
One way to imagine this is that there is some cyclic permutation $w'$ of $w$ so that  if you start from the vertex $u$ in
$\tree$ and follow
the edge path that repeatedly spells out the word $w'$, eventually
you arrive at the vertex $v$.
Thus, $\Wh_\mathcal{B}(\X)\{w\}$ records the ``line pattern'' that conjugates
of $\left<w\right>$ make as they pass through $\X$.

The classical Whitehead graph $\Wh_{\mathcal{B}}(*)\{\multimot\}$ is the
generalized Whitehead graph such that the subtree $\X$ is just the
identity vertex $*\in\tree$.

Manning \cite{Man09} shows that generalized Whitehead graphs can be
constructed from classical Whitehead graphs by a construction called
\emph{splicing}.
It is an easy observation that splicing
connected graphs with no cut vertices produces a connected graph with
no cut vertices. 
This observation gives us the following generalization of \fullref{lemma:loop}, which will be used
later as an inductive step in building detours:

\begin{lemma}\label{detour}
  If  $\Wh_{\mathcal{B}}(*)\{\multimot\}$ has \2cc then $\Wh_{\mathcal{B}}(\X)\{\multimot\}$ has \2cc for every
 compact subtree  $\X\subset\tree$.
\end{lemma}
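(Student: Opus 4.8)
The plan is to induct on the number of edges of the compact subtree $\X$, building $\X$ up from the single vertex $*$ one adjacent tree-vertex at a time and invoking Manning's splicing description of generalized Whitehead graphs together with the quoted fact that splicing connected graphs with no cut vertices yields a connected graph with no cut vertices.

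First I would record the translation between the two ways of phrasing the connectivity condition. By Menger's theorem in the form quoted above, a graph with no cut points has 2--connected components, and, conversely, a disjoint union of 2--connected graphs has no cut points. An isolated edge, which by \fullref{lemma:loop} is the only other possibility for a component of a minimal Whitehead graph, does have a cut point (an interior point of the edge), so for the graphs in play ``has 2--connected components'' is the same as ``has no cut points,'' and in particular the hypothesis rules out isolated-edge components. I will therefore track the property ``no cut points'' through the induction, equivalently ``no cut vertices'' since none of the components in play is a single edge.

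The base case $\X=\{*\}$ is exactly the hypothesis. For the inductive step, write $\X'=\X\cup\{v\}$ where $v$ is a single vertex joined to $\X$ by a tree-edge $e$. Manning's construction realizes $\Wh_{\mathcal{B}}(\X')\{\multimot\}$ as the splice of $\Wh_{\mathcal{B}}(\X)\{\multimot\}$ with a translate of the classical graph $\Wh_{\mathcal{B}}(\{v\})\{\multimot\}$, performed at the single pair of vertices corresponding to the two directions along $e$. Such a single-vertex splice alters only the two components containing the spliced vertices and leaves every other component untouched; \fullref{lemma:onecomponentperword} guarantees that this component-by-component bookkeeping is consistent. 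The affected component of $\Wh_{\mathcal{B}}(\X)\{\multimot\}$ is 2--connected by the inductive hypothesis, and the affected component of the translated classical graph is 2--connected by the hypothesis of the lemma, so the quoted observation applies: their splice is connected with no cut vertices, hence, not being a single edge, it has no cut points. Combined with the unchanged 2--connected components, this shows $\Wh_{\mathcal{B}}(\X')\{\multimot\}$ has no cut points, i.e.\ has 2--connected components, closing the induction.

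The step needing the most care is the inductive step itself: one must check that enlarging $\X$ by a single vertex really is a single-vertex splice in Manning's sense, so that the connected, cut-point-free observation can be applied to the individual matched components rather than to the disconnected Whitehead graphs as wholes. The remaining ingredients --- the equivalence with ``no cut points'' and the exclusion of isolated-edge components --- are routine consequences of the lemmas already in hand.
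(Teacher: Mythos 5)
Your proposal is correct and follows the paper's own route: the paper proves this lemma precisely by citing Manning's splicing construction together with the observation that splicing connected graphs with no cut vertices yields a connected graph with no cut vertices, which is exactly the engine of your induction. Your write-up merely makes explicit the vertex-by-vertex induction and the component-by-component bookkeeping that the paper leaves implicit.
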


\subsection{Quasi-trees}
The terms in this section are standard (see, for example, \cite{BriHae99}.)
The following theorem gathers together various characterizations of
virtually free groups:
\begin{theorem}[Geometric Characterization of Virtually Free Groups]
Let $G$ be a finitely generated group.  
Let $X$ be a proper geodesic metric space quasi-isometric to $G$. 
The following are equivalent:
  \begin{enumerate}
\item $G$ is virtually free: it has a finite index free subgroup.\label{item:fifree}
\item $G$ has a finite index normal free subgroup.\label{item:finfree}
\item $G$ decomposes as a graph of virtually free groups with
  finite edge groups.\label{item:vfreedecomposition}
\item $G$ decomposes as a graph of finite groups.\label{item:finitedecomposition}
  \item  $X$ is a quasi-tree: there is a simplicial tree $\Gamma$ and a $(\lambda,\epsilon)$--quasi-isometry $\phi\from
    X\to \Gamma$.\label{item:qtree}
\item (Bottleneck Property) There is a constant $\Delta>0$ so that for all $x$ and $y$ in
  $X$ there exists a midpoint $m$ such that
  $d(x,m)=d(y,m)=\frac{1}{2}d(x,y)$ and such that any path from $x$ to
  $y$ passes through $N_\Delta(m)$.\label{item:bottleneck}
\item (Bottleneck Property') For any $K\geq 1$ and any $C\geq 0$ there is a $\Delta'\geq 0$
  so that for any $x$ and $y$ in $X$, any $(K,C)$--quasi-geodesic
  segment $\gamma$ joining $x$ to $y$, and any continuous path $p$
  from $x$ to $y$, we have $\gamma\subset
  N_{\Delta'}(p)$.\label{item:paths}
\item $X$ is hyperbolic and the natural map from $\bdry X$ onto
 $\mathrm{Ends}(X)$ is a bijection.\label{item:ends}
  \end{enumerate}
\end{theorem}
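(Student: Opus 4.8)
The plan is to assemble the theorem from standard results, organizing the eight conditions into an algebraic cluster (\ref{item:fifree})--(\ref{item:finitedecomposition}), a quasi-tree cluster (\ref{item:qtree})--(\ref{item:paths}), and the boundary condition (\ref{item:ends}), proving the equivalences inside each cluster and then bridging them. None of the ingredients are original, so the work is in selecting and connecting the right classical theorems rather than proving anything from scratch.

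For the algebraic cluster I would first dispose of the easy implications: (\ref{item:finfree}) $\Rightarrow$ (\ref{item:fifree}) is trivial, and (\ref{item:fifree}) $\Rightarrow$ (\ref{item:finfree}) follows by passing to the normal core of a finite-index free subgroup, which is normal of finite index and free by the Nielsen--Schreier theorem. The equivalence (\ref{item:fifree}) $\Leftrightarrow$ (\ref{item:finitedecomposition}) is the Karrass--Pietrowski--Solitar structure theorem in one direction and Bass--Serre theory in the other: the fundamental group of a finite graph of finite groups acts cocompactly on its Bass--Serre tree with finite stabilizers and so is virtually free, while a finitely generated virtually free group is the fundamental group of a finite graph of finite groups. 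Since (\ref{item:finitedecomposition}) $\Rightarrow$ (\ref{item:vfreedecomposition}) is immediate, I would close the cluster by proving (\ref{item:vfreedecomposition}) $\Rightarrow$ (\ref{item:finitedecomposition}): refine the decomposition by replacing each virtually free vertex group with its own graph of finite groups, using that a finite edge group is elliptic and hence conjugates into a vertex group of the refinement.

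For the geometric cluster, the equivalences (\ref{item:qtree}) $\Leftrightarrow$ (\ref{item:bottleneck}) $\Leftrightarrow$ (\ref{item:paths}) are Manning's bottleneck criterion, which I would cite rather than reprove, as it is the technical heart of the quasi-tree characterization. I would bridge the clusters with (\ref{item:finitedecomposition}) $\Rightarrow$ (\ref{item:qtree}) via the \v{S}varc--Milnor lemma: $G$ acts properly and cocompactly on its Bass--Serre tree, so $X$, being quasi-isometric to $G$ and hence to a simplicial tree, is a quasi-tree. For (\ref{item:ends}) I would use that the canonical map $\bdry X \to \mathrm{Ends}(X)$ of a proper geodesic hyperbolic space is always a continuous surjection and is injective exactly when $\bdry X$ is totally disconnected; a quasi-tree has totally disconnected boundary, giving (\ref{item:qtree}) $\Rightarrow$ (\ref{item:ends}), and conversely a hyperbolic space with totally disconnected boundary has more than one end, feeding (\ref{item:ends}) into the same Stallings--Dunwoody machinery used below.

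The main obstacle is the return trip from geometry to algebra, say (\ref{item:qtree}) $\Rightarrow$ (\ref{item:finitedecomposition}). A quasi-tree is hyperbolic with totally disconnected boundary, so an infinite $G$ has more than one end and, by Stallings' ends theorem, splits nontrivially over a finite subgroup. The difficulty is that one splitting is not enough: I must iterate and guarantee termination in a finite graph of finite groups. This is precisely Dunwoody's accessibility theorem, valid here because virtually free groups---indeed all finitely presented groups---are accessible. I would cite accessibility rather than prove it, and this, together with Manning's criterion, is where the real depth of the theorem resides.
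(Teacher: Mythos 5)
Your proposal is correct and its skeleton matches the paper's: Karrass--Pietrowski--Solitar plus Stallings for the algebraic cluster, Manning's bottleneck theorem for the quasi-tree cluster, the action on the Bass--Serre tree for (4)~$\Rightarrow$~(5), and Stallings' ends theorem plus Dunwoody accessibility for the return trip from geometry to algebra. The one genuine divergence is how condition (8) is threaded through, and it is worth comparing. The paper derives (8) directly from (7): the Bottleneck Property$'$ shows at once that no two boundary points lie in the same end, so injectivity of $\partial X\to\mathrm{Ends}(X)$ comes for free, and the paper never mentions total disconnectedness; it then uses (8) itself to kill one-ended vertex groups in the terminal Dunwoody decomposition (such a vertex group is undistorted and would contribute two boundary points in a single end). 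You instead pivot on the assertion that $\partial X\to\mathrm{Ends}(X)$ is injective exactly when $\partial X$ is totally disconnected. The direction ``injective $\Rightarrow$ totally disconnected'' is easy (the map is continuous and end spaces are totally disconnected, so each component of $\partial X$ maps to a point), but the direction you actually need for (5)~$\Rightarrow$~(8) --- totally disconnected $\Rightarrow$ injective, i.e.\ boundary points in a common end lie in a common component of $\partial X$ --- is a real lemma for general proper geodesic hyperbolic spaces, not an off-the-shelf citation; it can be proved, for instance, by taking Hausdorff limits in the compactification $X\cup\partial X$ of the detour paths furnished by the end condition, or sidestepped entirely (as the paper does) via (7), or via quasi-isometry naturality of both $\partial$ and $\mathrm{Ends}$ applied to the tree. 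Two smaller repairs: your claim that a hyperbolic space with totally disconnected boundary has more than one end is false for spaces (a geodesic ray is a counterexample) and should be stated for infinite groups, where $\partial G$ has at least two points; and in the accessibility step you should say explicitly why no one-ended vertex group survives --- it is quasiconvex, hence hyperbolic with connected boundary of more than one point embedding into the totally disconnected $\partial X$, a contradiction --- since accessibility alone only yields vertex groups that are finite or one-ended, not a graph of finite groups. With those sentences added, your argument closes up and is essentially the paper's.
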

\begin{proof}

  (\ref{item:finfree}) follows easily from (\ref{item:fifree}).

The equivalence of (\ref{item:fifree}) and
(\ref{item:finitedecomposition}) is a theorem of Karass, Pietrowski,
and Solitar \cite{KarPieSol73}, using Stallings' Theorem \cite{Sta68}.
Item (\ref{item:vfreedecomposition}) is a variant.

(\ref{item:finitedecomposition}) implies (\ref{item:qtree}) since $G$
acts properly discontinuously  and cocompactly on the Bass-Serre tree
of the graph of groups decomposition.

The Bottleneck Property is due to Manning, who shows \cite[Theorem 4.6]{Man05} the equivalence of
(\ref{item:qtree})  and (\ref{item:bottleneck}).

Condition (\ref{item:paths}) is different version of the bottleneck
property.
It is just a coarsening of the fact that for
any two points $x$ and $y$ in a simplicial tree there is a unique
geodesic $[x,y]$ joining them, and any path $p$ joining $x$ to $y$
necessarily contains $[x,y]$.

$(\ref{item:qtree})\implies(\ref{item:paths})$ is proven by pushing
$\gamma$ and $p$ forward to $\Gamma$ with $\phi$, applying this fact,
and then pulling back to $X$ using a quasi-isometry inverse of $\phi$.

$(\ref{item:paths})\implies(\ref{item:bottleneck})$ is proven by
taking a geodesic segment $\gamma$ joining $x$ to $y$ and taking $m$
to be the midpoint of $\gamma$.
(\ref{item:bottleneck}) follows with $\Delta=\Delta'(1,0)$.

If $X$ is a quasi-tree it is hyperbolic and has a well defined
boundary at infinity.
(\ref{item:paths}) shows that no two boundary points lie in the
same end, thus (\ref{item:qtree})
implies (\ref{item:ends}).

Finally, if $G$ is finite the theorem is trivially true, and if it is
infinite and 
(\ref{item:ends}) holds then $\bdry X$ and $\mathrm{Ends}(X)$ have
at least two points.
By Stallings' Theorem $G$ splits over a finite group, and by
Dunwoody's Accessibility Theorem \cite{Dun85} there is a graph of
groups decomposition of $G$ over finite groups so that all of the
vertex groups are either finite or one-ended.
A one-ended vertex group would violate condition (\ref{item:ends}),
though, so $G$ satisfies condition (\ref{item:finitedecomposition}).
\end{proof}

To show something is not a quasi-tree we will show that it is possible
to detour around some bottleneck point, violating condition (\ref{item:paths}). Formally:

\begin{corollary}\label{notaqtree}
  A geodesic metric space $X$ is not a quasi-tree if there exists a
  quasi-geodesic $\gamma\from \mathbb{R}\to X$ and an increasing
  sequence $(t_i)$ of positive integers such that $\gamma(-t_i)$ and
  $\gamma(t_i)$ can be connected by a path that does not enter $N_i(\gamma(0))$.
\end{corollary}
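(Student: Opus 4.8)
The plan is to prove the contrapositive: assuming $X$ \emph{is} a quasi-tree, I will show that the hypothesized configuration of detours cannot exist. The corollary is really just a repackaging of the equivalence, established in the preceding theorem, between being a quasi-tree, characterization (\ref{item:qtree}), and the path version of the bottleneck property, condition (\ref{item:paths}); all of the geometric content lives there, and the corollary only phrases that equivalence in the form we will actually apply.

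So suppose $X$ is a quasi-tree. Since $\gamma\from\mathbb{R}\to X$ is a quasi-geodesic, it is a $(K,C)$--quasi-geodesic for some $K\geq 1$ and $C\geq 0$. Applying condition (\ref{item:paths}) to these constants produces a single $\Delta'\geq 0$ with the stated property. Now fix any index $i$. Because $t_i>0$, the point $\gamma(0)$ lies on the restriction $\gamma|_{[-t_i,t_i]}$, which is itself a $(K,C)$--quasi-geodesic segment joining $\gamma(-t_i)$ to $\gamma(t_i)$ (restricting a quasi-geodesic preserves its constants). Let $p_i$ be the continuous path joining these same two endpoints that is furnished by the hypothesis. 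Condition (\ref{item:paths}) then gives $\gamma|_{[-t_i,t_i]}\subset N_{\Delta'}(p_i)$, and in particular $d(\gamma(0),p_i)\leq\Delta'$.

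On the other hand, the hypothesis states that $p_i$ does not enter $N_i(\gamma(0))$, so every point of $p_i$ lies at distance at least $i$ from $\gamma(0)$, whence $d(\gamma(0),p_i)\geq i$. Combining the two estimates yields $i\leq\Delta'$ for every $i$, which is absurd as soon as $i>\Delta'$. This contradiction shows that $X$ is not a quasi-tree.

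The argument is entirely formal, so I do not anticipate a genuine obstacle; the only points requiring care are verifying that $\gamma(0)$ really does lie on the finite segment $\gamma|_{[-t_i,t_i]}$ (guaranteed because the $t_i$ are positive) and that the restriction retains the quasi-geodesic constants $(K,C)$, so that the \emph{single} $\Delta'$ obtained at the outset applies uniformly to all of the segments at once. The substantive work has already been done in proving the geometric characterization theorem.
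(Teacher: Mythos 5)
Your proof is correct and matches the paper's approach: the paper states this corollary as an immediate formal consequence of the path version of the bottleneck property (condition (\ref{item:paths}) of the characterization theorem), which is exactly the contrapositive argument you spell out. Your care about restricting the quasi-geodesic while keeping the constants $(K,C)$, so that one $\Delta'$ works uniformly in $i$, is precisely the point that makes the deduction go through.
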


\subsection{Geometric Models}\label{sec:model}
In the torsion free case we build a Bass-Serre complex $X$ for $G=A*_C
B$ as follows (the HNN case is similar).
Let $\mathcal{K}_A$ be a rose with $\pi_1(\mathcal{K}_A)=A$, and
similarly let $\mathcal{K}_B$ be a rose for $B$.
Let $\mathcal{K}_C=S^1\times [0,1]$ be an annulus with
$\pi_1(\mathcal{K}_C)=C$.
Build a space $\mathcal{K}$ with $\pi_1(\mathcal{K})=G$ by gluing one
boundary component of $\mathcal{K}_C$ to $\mathcal{K}_A$ according to
the edge injection $C\hookrightarrow A$, and similarly glue the other
boundary component to $\mathcal{K}_B$.
Let $X=\widetilde{\mathcal{K}}$.
See Scott-Wall \cite{ScoWal79} and Mosher-Sageev-Whyte
\cite{MosSagWhy04} for details.

A \emph{vertex space} is a connected component in $X$ of the preimage
of  $\mathcal{K}_A$ or $\mathcal{K}_B$.
In our case these are copies of Cayley trees for $A$ and $B$.
An \emph{edge strip} is a connected component of the preimage of one
of the $\mathcal{K}_C$, a bi-infinite, width 1 strip.
The quotient map that collapses each vertex space to a point and each
edge strip to an interval gives a $G$-equivariant map from $X$ to the
Bass-Serre tree of the graph of groups decomposition of $G$, so we call
$X$ the \emph{Bass-Serre Complex}.

The edge strips glue onto the vertex spaces along conjugates of the
image $\left<w\right>$ of the edge inclusion.
Thus, the Whitehead graph for $w$, or for $\{w_1,w_2\}$ in the HNN
case, records the intersection pattern of edge strips in a vertex space.

We will refer to paths that remain within a single vertex space as
\emph{horizontal}, and paths that go directly across an edge space as \emph{vertical}.

In the presence of torsion we can use the same construction to build a Bass-Serre complex, but the vertex and edge spaces may not be so
nice.
However, in the proof we will only need the fact that $G$
and $X$ are quasi-isometric, not that $G$ acts nicely on $X$.
Thus, we can make a trade: we will build a ``nicer'' space $X'$
quasi-isometric to $G$, but sacrifice the $G$ action to do so.
To do this we will choose finite index normal free subgroups $A'$ and
$B'$ of $A$ and $B$, respectively.
Fix bases for each of these, and replace each $A$--vertex space in $X$ by a
copy of the Cayley tree for $A'$, and similarly for $B$.
Each edge strip of $X$ glues on to an $A$--vertex space and a
$B$--vertex space along coarsely well defined lines, and we can use
quasi-isometry inverses to the inclusion maps $A'\hookrightarrow A$
and $B'\hookrightarrow B$ to give lines in the $X'$ vertex spaces to
attach edge strips to (see \fullref{eq}).
The resulting space $X'$ is a coarse Bass-Serre complex (see
Mosher-Sageev-Whyte \cite[Section 2.6]{MosSagWhy04}).


\section{Proof of \fullref{theorem:SSS}}
\subsection{Amalgamated Product Case}
First, consider $G=A*_{\left<w\right>} B$.
Choose a basis for the minimal free factor $\hat{w}_A$ of $A$ containing
$w$ such that $w$ has minimal length and
extend it arbitrarily to a basis $\mathcal{B}_A$ of $A$.
Let $\mathcal{K}_A$ be the rose with $\rank(A)$ petals in bijection with
$\mathcal{B}_A$.
Repeat the construction for $B$.

Metrize $\mathcal{K}_A$ so that the edges have length $|[w]|_{\mathcal{B}_B}$.
Metrize $\mathcal{K}_B$ so that the edges have length $|[w]|_{\mathcal{B}_A}$.
Let $\mathcal{K}_C$ be a height 1 right annulus with boundary circles of length
$|[w]|_{\mathcal{B}_A}\cdot|[w]|_{\mathcal{B}_B}$.
With these choices the vertex spaces and
edge strips are isometrically embedded in the corresponding
 Bass-Serre complex $X$.

Choose a basepoint $\gamma(0)$ in an $A$--vertex space $X_\alpha$.
Define a map $\gamma\from |[w]|_{\mathcal{B}_A}\mathbb{Z}\to X$ by $\gamma(
|[w]|_{\mathcal{B}_A}\cdot t)=w^t.\gamma(0)$, and
extend linearly to get a map from $\mathbb{R}$.
To satisfy \fullref{notaqtree} it suffices to take the sequence
$(t_i= |[w]|_{\mathcal{B}_A}\cdot i)$.
To see this, for each $i>0$ we construct a path $p_i$ joining
$\gamma(-t_i)$ to $\gamma(t_i)$ that stays outside 
$N_i(\gamma(0))$.

Fix any $i>0$. Take the 0-th approximation $q_{i0}$ to $p_i$ to be the subsegment of
$\gamma$ connecting $\gamma(-t_i)$ to $\gamma(t_i)$.
Of course, this goes through $N_i(\gamma(0))$.

We will inductively push out the approximations of $p_i$ until we
leave  $N_i(\gamma(0))$, thereby creating a detour.
Depending on $X$ we can push vertically or horizontally.

First, suppose that $w$ is divisible in $A$. 
In this case, for any line in $X_\alpha$ to which an edge strip
attaches, there are at least two edge strips attached.
Construct $q_{i1}$ from $q_{i0}$ by pushing the segment vertically across one of the edge
strips that it lies on the boundary of.
That is, replace the horizontal segment $q_{i0}$ along one boundary
of the edge strip by a path that goes vertically across the edge
strip, horizontally across the opposite side, and then vertically
back.

The vertical segments of $q_{i1}$ lie outside $N_i(\gamma(0))$. 
The horizontal segment may not, but it has at least moved distance
one farther away from $\gamma(0)$ than $q_{i0}$.
This new horizontal segment lies in a $B$-vertex space.
Now, if $w$ is also divisible in $B$ then there are at least two edge strips that attach to the line
we have just arrived on. 
Thus, we can push the horizontal
segment vertically  across an edge strip different from the edge strip that we
used in the previous step, so that the horizontal segment gets farther
from $\gamma(0)$. 
Continuing in this way, the vertical
segments always stay outside $N_i(\gamma(0))$, and after $i$ steps the
horizontal segment is also outside $N_i(\gamma(0))$.

If $w$ is indivisible we must also push horizontally.
Suppose $w$ is indivisible in $A$.
By \fullref{lemma:onecomponentperword}, $\Wh_{\mathcal{B}_A}(*)\{w\}$
has one non-trivial connected component.
If $\left<w\right>$ is not a factor then the non-trivial connected
component is not an isolated edge, so by \fullref{lemma:loop}
$\Wh_{\mathcal{B}_A}(*)\{w\}$ has \2cc.
Note that since $X_\alpha$ is isometrically embedded, $X_\alpha\cap N_i(\gamma(0))$ is just the $i$--ball $N_i^{X_\alpha}(\gamma(0))$ in
$X_\alpha$ in its own natural metric (the one lifted from $\mathcal{K}_A$).
The two vertices $u$ and $v$ in $\closure{N_i(0)}\cap \gamma$ are adjacent in
$\Wh_{\mathcal{B}_A}(N^{X_\alpha}_i(\gamma(0)))\{w\}$; they are connected by an
edge $e$ corresponding to a segment of $q_{i0}$.
By \fullref{detour},
$\Wh_{\mathcal{B}_A}(N^{X_\alpha}_i(\gamma(0)))\{w\}$ has \2cc, so  there is another path connecting $u$
and $v$, an edge path $e_1,\dots, e_k$ that
does not use the edge $e$.
Each edge $e_j$ corresponds to a geodesic segment in $X_\alpha$
joining vertices outside of $N_i^{X_\alpha}(\gamma(0))$.
Construct $\overset{\frown}{q}_{i0}$ from $q_{i0}$ by replacing the $e$--segment by the
segments coming from the alternate path in the generalized Whitehead graph.

Each of the new horizontal segments has endpoints $u'$ and $v'$ outside of
$N_i(\gamma(0))$.
Furthermore, each of these new segments has an edge strip attached along
it.
Construct $q_{i1}$ from $\overset{\frown}{q}_{i0}$ by pushing each
horizontal segment vertically across an edge
strip.
As in the previous case, the vertical segments of $q_{i1}$ stay outside
$N_i(\gamma(0))$, and the horizontal
segments move farther from $\gamma(0)$.

The new horizontal segments lie in $B$-vertex spaces.
We can continue the construction if it is possible to push each of
these segments vertically or horizontally without pushing back across
an edge strip that was already crossed.
Thus, we would like to know that
each of these segments is on the boundary of two edge strips or that $\Wh_{\mathcal{B}_B}(*)\{w\}$ has \2cc.
If $\left<w\right>$ is not a factor of $B$ then one of these is true.

Thus, if $\left<w\right>$ is
a factor in neither $A$ nor $B$ we can push $\gamma$ out of any
$N_i(\gamma(0))$, so $X$ is not a quasi-tree, so
$A*_{\left<w\right>}B$ is not free. (Not even virtually free.)

\subsection{HNN Extension Case}
Let $G=A*_C=\left<A,t\mid \inv{t}w_1t=w_2\right>$, where $w_1$ and
$w_2$ are words in $A$. The edge injections are the maps $C\stackrel{\cong}{\longrightarrow} \left<w_i\right>$.

If $w_1$ and $w_2$ are conjugate into a common
maximal cyclic subgroup then $G$ contains a Baumslag-Solitar subgroup,
so it is not hyperbolic, hence not free.
Otherwise the vertex spaces are quasi-isometrically embedded and we
may repeat the construction from the amalgamated product case.

Take $t_i$ large enough so that $d(\gamma(\pm
t_i),\gamma(0))\geq 2i$.
If there is an initial horizontal push, take the new set of vertices
to also lie outside $N_{2i}(\gamma(0))$.
A vertical segment from such a vertex may lead closer to $\gamma(0)$, but
stays outside $N_{2i-1}(\gamma(0))$. 
Make sure the next round of horizontal pushing gives vertices outside
of $N_{2i-1}(\gamma(0))$, so that the next vertical segments stay
outside $N_{2i-2}(\gamma(0))$, etc.
$N_i(\gamma(0))$ still reaches across at most $i-1$ edge strips, so at
the $i$--th stage all vertical and horizontal segments lie outside $N_i(\gamma(0))$.

If $w_1$ and $w_2$ are both divisible then we only need to push
vertically, as before, to avoid the bottleneck point, so $G$ is not virtually free.

Otherwise, choose a basis $\mathcal{B}$ so that the Whitehead graph for $\multimot=\{w_1,w_2\}$
is minimal. 
Recall that by definition
$\Wh_{\mathcal{B}}(*)\{\multimot\}=\Wh_{\mathcal{B}}(*)\{\underline{v}\}$
where $\underline{v}=\{v_1,v_2\}$ such that $v_1$ and $v_2$ 
are indivisible, cyclically reduced with respect to $\mathcal{B}$, and so that
there exists an $a_i\in A$ such that $w_i\in a_i\left<v_i\right>\inv{a_i}$.
We may assume $a_1$ is trivial.

There are two possibilities. Either
$\Wh_{\mathcal{B}}(*)\{\underline{v}\}$ has only one non-trivial
connected component, or it has distinct components corresponding to $v_1$
and $v_2$.
In the first case the component has more than one edge, so, by
\fullref{lemma:loop}, $\Wh_{\mathcal{B}}(*)\{\underline{v}\}$ has \2cc.

In the second case, for each $i$ either the component containing $v_i$
is 2--connected or it is an isolated edge and $v_i$ is basic.

Thus, we can repeat the construction to build a path avoiding the
bottleneck point, and $G$ is not
virtually free, unless for some $i$, say $i=2$, we have both:
\begin{itemize}
\item $w_2$ is indivisible, and
\item $v_2$ is basic and gives an isolated edge in $\Wh_{\mathcal{B}}(*)\{\underline{v}\}$.
\end{itemize}

Now, the second condition implies there is a splitting
$A=A'*\left<v_2\right>$ with $w_1\in \left<v_1\right>\subset A'$.
If $w_2$ is
indivisible then $w_2=a_2v_2\inv{a_2}$ (after possibly exchanging $v_2$
and $\inv{v_2}$), so
\[A =A'*\left<v_2\right>= A'*\inv{a_2}\left<w_2\right>a_2\]

Thus, $G$ is not free unless, up to $A$-conjugation, the edge
injections map $C$ into independent factors, and one of them is onto.

\section{Factors}\label{sec:factors}
To prove the theorem with torsion we will need a
characterization of when an infinite subgroup is a factor.
Recall this means that the subgroup appears as a vertex group in a
graph of groups decomposition with finite edge groups.
We make use of some results about the boundaries of relatively
hyperbolic groups due to Bowditch \cite{Bow11} and Groves and Manning \cite{GroMan08}.

A collection of subgroups $\underline{H}=\{H_1,\dots, H_k\}$ is an \emph{almost
  malnormal collection} if $|gH_i\inv{g}\cap H_j|=\infty$ implies
$i=j$ and $g\in H_i$.

If $G$ is a finitely generated hyperbolic group and $\underline{H}$ is
an almost malnormal collection of infinite, finitely generated, quasi-convex subgroups,
then $G$ is hyperbolic relative to $\underline{H}$ \cite[Theorem
7.11]{Bow11}.
There is a relatively hyperbolic boundary of $(G,\underline{H})$ that
we will denote $\mathcal{D}_{\underline{H}}$. 
This can be seen as the boundary of the ``cusped space'' obtained
from $G$ by hanging a horoball off each conjugate of each of the $H_i$'s
\cite{GroMan08}. 
The effect of this is to collapse the embedded image of each boundary
of a conjugate of an $H_i$ to a point. 
Thus, $\mathcal{D}_{\underline{H}}$ is the \emph{decomposition space}
that has one point for each distinct conjugate of each $H_i$ and one
point for each boundary point of $G$ that is not a boundary point of
some conjugate of an $H_i$.

We say that $G$ splits relative to $\underline{H}$ if there is a
splitting of $G$ so that each  $H_i$ is conjugate into
a vertex group of the splitting.
It is easy to see that corresponding to each edge in the Bass-Serre
tree of a splitting of $G$ over a finite group relative to
$\underline{H}$ there is a pair of complementary
nonempty clopen sets of $\mathcal{D}_H$.
Moreover, there is an analogue \cite[Proposition 10.1]{Bow11} of
Stallings' Theorem: $G$ splits over a finite group relative to
$\underline{H}$ if and only if $\mathcal{D}_{\underline{H}}$ is not
connected.

\begin{proposition}
Let $H$ be an infinite subgroup of a finitely generated hyperbolic
group $G$.
 Then $H$ is a factor of $G$ if and only if $H$ is finitely generated, quasi-convex,
 almost malnormal, and the connected component of $\mathcal{D}_H$
 containing the image of $\bdry H$ is a single point.
\end{proposition}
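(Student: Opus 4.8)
The plan is to prove both directions using the decomposition-space characterization of splittings (Bowditch's analogue of Stallings' Theorem) together with the fact that $H$ being a factor means precisely that $H$ is a vertex group in a splitting over finite edge groups relative to which $H$ is elliptic, i.e.\ a splitting relative to $\{H\}$.

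\textbf{The ``only if'' direction.} Suppose $H$ is a factor, so $H$ is a vertex group in a graph of groups decomposition of $G$ with finite edge groups. Since $G$ is hyperbolic, vertex groups of such a splitting are quasi-convex, so $H$ is finitely generated and quasi-convex; I would cite that vertex groups of splittings over finite subgroups in a hyperbolic group are quasi-convex. To see that $H$ is almost malnormal, I would argue that in the Bass-Serre tree for this splitting $H$ is the stabilizer of a vertex $v$, and if $gH\inv{g}\cap H$ were infinite for some $g\notin H$, then $gH\inv{g}$ and $H$ would be vertex stabilizers of distinct vertices $g.v\neq v$ sharing an infinite subgroup; but the stabilizer of any edge on the geodesic between $g.v$ and $v$ is finite and contains the intersection of the two vertex stabilizers, a contradiction. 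This gives almost malnormality, so by Bowditch's theorem $G$ is hyperbolic relative to $\{H\}$ and $\mathcal{D}_H$ is defined. Finally, the splitting of $G$ with $H$ elliptic is a splitting relative to $\{H\}$: since $H$ itself is a full vertex group, the ``side'' of $\mathcal{D}_H$ coming from $H$ is exactly the single point $[H]$ to which $\bdry H$ is collapsed, and I must check that this point is its own connected component. The point is that collapsing $\bdry H$ to a point and then removing that point disconnects $\mathcal{D}_H$ into the pieces carried by the other vertex groups, since the edge groups are finite; hence the component of $\mathcal{D}_H$ containing the image of $\bdry H$ is just $\{[H]\}$.

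\textbf{The ``if'' direction.} Conversely, assume $H$ is finitely generated, quasi-convex, almost malnormal, and that the component of $\mathcal{D}_H$ containing the image of $\bdry H$ is the single point $[H]$. By Bowditch's theorem $(G,\{H\})$ is relatively hyperbolic with boundary $\mathcal{D}_H$. The strategy is to produce a splitting over a finite group relative to $\{H\}$ that makes $H$ a full vertex group, and this is where the analogue of Stallings' Theorem enters: a splitting over a finite group relative to $\{H\}$ exists precisely when $\mathcal{D}_H$ is disconnected. Because $[H]$ is an isolated component, I would remove it: the space $\mathcal{D}_H\setminus\{[H]\}$ is disconnected from $[H]$, giving a separation of $\mathcal{D}_H$ into the clopen set $\{[H]\}$ and its complement, hence $\mathcal{D}_H$ is not connected, and Bowditch's splitting criterion produces a splitting of $G$ over a finite group relative to $\{H\}$. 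I then run an accessibility/refinement argument (Dunwoody accessibility for relative splittings over finite groups) to produce a graph of groups decomposition over finite edge groups in which $H$ is elliptic and all the vertex groups other than the one containing $H$ contribute nontrivial pieces of $\mathcal{D}_H$; the vertex group $V$ containing $H$ then has $\bdry V$ collapsing into the component $\{[H]\}$, forcing $V=H$. Thus $H$ appears as a vertex group of a splitting over finite edge groups, i.e.\ $H$ is a factor.

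\textbf{Main obstacle.} The delicate point is the passage in the ``if'' direction from the mere disconnectedness of $\mathcal{D}_H$ (which yields \emph{some} splitting over a finite group relative to $\{H\}$) to a splitting in which $H$ is an \emph{entire} vertex group rather than a proper subgroup of one. One gets a splitting for free, but promoting it so that the vertex group containing $H$ equals $H$ requires controlling how the boundary of that vertex group sits inside $\mathcal{D}_H$: I need that no boundary point outside the image of $\bdry H$ lies in the same component as $[H]$, which is exactly the hypothesis, and I must leverage it to iterate or refine the splitting until the $H$-vertex group has no further boundary to split off. Handling this carefully, and ensuring the refinement terminates (via accessibility), is the technical heart of the argument; the ``only if'' direction and the identification of $\mathcal{D}_H$ as a decomposition space are comparatively routine given the cited results of Bowditch and Groves--Manning.
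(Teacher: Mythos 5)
Your overall strategy is the same as the paper's: the ``only if'' direction via Bass--Serre theory, and the ``if'' direction via Bowditch's relative analogue of Stallings' Theorem plus an accessibility argument (the paper phrases this as passing to the unique minimal factor containing $H$, which plays the role of your terminal vertex group $V$). However, there is a genuine gap at exactly the step you flag as the ``technical heart'': you assert that $\bdry V$ collapsing into the component $\{[H]\}$ ``forces $V=H$,'' and this is never proved. Observe that your endgame never invokes almost malnormality or quasi-convexity, and without them the desired implication is false. For example, take $G=\mathbb{Z}$ and $H=2\mathbb{Z}$: then $H$ is infinite, finitely generated, quasi-convex, has a single conjugate, and $\mathcal{D}_H$ is a single point, yet $2\mathbb{Z}$ is not a factor of $\mathbb{Z}$ (the only infinite vertex group in a splitting of $\mathbb{Z}$ over finite groups is $\mathbb{Z}$ itself). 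Of course $2\mathbb{Z}$ is not almost malnormal, which is precisely why this is consistent with the proposition; but it shows that knowing the image of $\bdry V$ in $\mathcal{D}_H$ is the single point $[H]$ --- equivalently, that $\bdry H=\bdry V$ as subsets of $\bdry G$ --- cannot by itself force $V=H$.

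The paper closes this gap with a specific chain of implications that your sketch is missing: once the ambient group (the minimal factor, or your $V$) admits no splitting over a finite group relative to $H$, Bowditch's criterion gives that its decomposition space is connected; the singleton-component hypothesis then forces that decomposition space to be a single point; hence the inclusion $H\hookrightarrow V$ induces a homeomorphism $\bdry H\to\bdry V$; since $H$ is finitely generated and quasi-convex, equality of limit sets gives that $H$ has finite index in $V$; and almost malnormality then forces the index to be one. You need all of this (finite index from quasi-convexity, then index one from almost malnormality) to finish; with it added, your argument becomes essentially the paper's proof. A second, smaller slip: a singleton connected component need not be clopen (every component of a Cantor set is a singleton, but none is open), so your claimed separation of $\mathcal{D}_H$ into $\{[H]\}$ and its complement is not valid as stated. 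What you actually get --- and all you need for Bowditch's splitting criterion --- is that $\mathcal{D}_H$ has more than one connected component, hence is disconnected, whenever it is not itself a single point; the case where $\mathcal{D}_H$ is a single point must then be handled directly by the boundary-homeomorphism argument above.
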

\begin{proof}
The ``only if'' direction is easy.
For the converse, suppose $H$ is not a proper factor of $G$.
We will show $H=G$.

$H$ is infinite, so there is a unique minimal factor containing it.
A factor of a factor is a factor, since finite groups act elliptically
on any tree, so we may assume $H$ is not contained in a proper factor
of $G$.
This means that $G$ does not split relative to $H$, so $\mathcal{D}_H$
is connected. Since the component containing the image of $\bdry H$ is
a single point, all of $\mathcal{D}_H$ is a single point.
This means the inclusion of $H$ into $G$ induces a homeomorphism between
$\bdry H$ and $\bdry G$.
Since $H$ is finitely generated this implies that $H$ is a finite index
subgroup of $G$. 
However, $H$ is almost malnormal, so the index must be one.
\end{proof}

\begin{corollary}\label{proposition:factorimpliestotdisc}
Let $H$ be an infinite subgroup of a finitely generated virtually free
group $G$. 
Then $H$ is a factor of $G$ if and only if $H$ is finitely generated, almost malnormal, and $\mathcal{D}_H$ is totally disconnected.  
\end{corollary}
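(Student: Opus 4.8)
The plan is to deduce the corollary directly from the preceding proposition, treating the two implications separately and supplying only two extra facts about the virtually free group $G$: that its finitely generated subgroups are quasi-convex, and that $\bdry G$ is totally disconnected. The latter holds because $G$, being virtually free, is hyperbolic and, by the Geometric Characterization of Virtually Free Groups, has boundary homeomorphic to its (totally disconnected) space of ends.

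For the backward implication, suppose $H$ is finitely generated, almost malnormal, and $\mathcal{D}_H$ is totally disconnected. First I would record that $G$ is \emph{locally quasi-convex}: if $F\le G$ is a finite-index free subgroup, then $H\cap F$ has finite index in $H$, hence is finitely generated, and a finitely generated subgroup of a free group is quasi-convex; since $F$ is quasi-isometrically embedded and $H$ lies at finite Hausdorff distance from $H\cap F$, the subgroup $H$ is quasi-convex in $G$ (this also guarantees $\mathcal{D}_H$ is defined). Total disconnectedness of $\mathcal{D}_H$ trivially forces the component of the image of $\bdry H$ to be a single point, so every hypothesis of the proposition is met and $H$ is a factor.

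The forward implication carries the real content. Suppose $H$ is a factor. The proposition immediately yields that $H$ is finitely generated, almost malnormal, and quasi-convex, and that the component of the image of $\bdry H$ is a single point; it remains to upgrade this single statement to total disconnectedness of all of $\mathcal{D}_H$. The key observation is that the action of $G$ on $\bdry G$ descends to an action by homeomorphisms on the compact metrizable space $\mathcal{D}_H$, since conjugation permutes the collapsed sets $\bdry(gHg^{-1})$, and that this action is transitive on the ``cone points'' (the images of the $\bdry(gHg^{-1})$), because all conjugates of $H$ form a single conjugacy class. A homeomorphism preserves connected components, so once the component of the cone point coming from $H$ itself (the image of $\bdry H$) is a single point, the component of \emph{every} cone point is a single point.

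Finally I would rule out any nondegenerate component consisting of non-cone points. Let $q\from\bdry G\to\mathcal{D}_H$ be the quotient map and let $C$ be a connected component with more than one point. By the previous step $C$ contains no cone point, so each of its points has a singleton $q$-fiber; hence $q$ restricts to a continuous bijection from the compact set $q^{-1}(C)$ onto $C$, which is a homeomorphism since $\mathcal{D}_H$ is Hausdorff. Then $q^{-1}(C)$ would be a connected subset of $\bdry G$ with more than one point, contradicting total disconnectedness of $\bdry G$. I expect this last lifting step to be the main obstacle: $q$ is not monotone, since its fibers over cone points are the disconnected sets $\bdry(gHg^{-1})$, so one cannot transfer connectivity across $q$ globally. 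The resolution is to first use the $G$-action to collapse all cone-point components to points, and then to restrict $q$ to the locus of singleton fibers, where it becomes a homeomorphism and connectivity can be pulled back.
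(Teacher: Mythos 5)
Your proof is correct, but the forward implication takes a genuinely different route from the paper's. The paper argues concretely: since $G$ is virtually free, a factor decomposition can be refined so that every local group other than $H$ is finite, and the components of $\mathcal{D}_H$ are then read off from the Bass--Serre tree of this refined splitting---singletons corresponding to the conjugates of $H$ and to the ends of the tree. You instead stay abstract: you quote the easy (``only if'') direction of the preceding proposition to get that the component of the image of $\bdry H$ is a single point, propagate this to \emph{all} cone points via the induced $G$-action on $\mathcal{D}_H$ (which is transitive on cone points), and then rule out any nondegenerate component $C$ of non-cone points by observing that $q$ is injective over $C$, hence restricts to a homeomorphism from the compact set $q^{-1}(C)$ onto $C$, so a nondegenerate connected $C$ would lift to a nondegenerate connected subset of the totally disconnected space $\bdry G$. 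What the paper's route buys is an explicit picture of $\mathcal{D}_H$ (and the useful intermediate fact that a factor of a virtually free group is a vertex group of a splitting in which all other local groups are finite), at the cost of asserting the description of the components without proof; what your route buys is that the point-set details are actually verified---in particular you correctly identify and circumvent the non-monotonicity of $q$, which is exactly why connectivity cannot be transferred across the quotient naively---and the only input you need about virtual freeness of $G$ is total disconnectedness of $\bdry G$. Your handling of the backward implication (supplying local quasi-convexity of $G$ so that the proposition applies, and noting total disconnectedness trivially gives the single-point-component hypothesis) is exactly what the paper leaves implicit, so there the two arguments agree.
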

\begin{proof}
 Since $G$ is virtually free, $H$ is a factor if and only if $G$ has a graph of groups
 decomposition such that $H$ is a vertex group and all other local
 groups are finite.
The components of $\mathcal{D}_H$ in this case are singletons for each
conjugate of $H$ and each end of the Bass-Serre tree of the splitting.
\end{proof}

\begin{proposition}\label{proposition:independentfactors}
  Let $\underline{H}=\{H_1,H_2\}$ be an almost malnormal collection of
  infinite, finitely generated, quasi-convex subgroups of a hyperbolic
  group $G$.
Up to conjugation, $H_1$ and $H_2$ are contained in independent
factors of $G$ if and only if the component of
$\mathcal{D}_{\underline{H}}$ containing the image of $\bdry H_1$ does
not contain the image of the boundary of any conjugate of $H_2$.
\end{proposition}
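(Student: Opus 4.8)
The plan is to mirror the structure of the preceding \fullref{proposition:factorimpliestotdisc} and the factor proposition, reducing the independence question to a connectedness statement about the decomposition space. Recall that ``$H_1$ and $H_2$ are contained in independent factors'' means there is a single graph of groups decomposition of $G$ with finite edge groups in which (conjugates of) $H_1$ and $H_2$ are each contained in a vertex group. By the Stallings-type result of Bowditch quoted above, relative splittings over finite groups correspond exactly to disconnections of $\mathcal{D}_{\underline{H}}$, so the geometric content of ``independence'' should be visible in how the images $\bdry H_1$ and $\bdry H_2$ sit inside the components of $\mathcal{D}_{\underline{H}}$.

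For the ``only if'' direction I would start from an independent factorization. If $H_1$ and $H_2$ lie in distinct vertex groups $V_1\neq V_2$ of a finite-edge-group splitting, then the Bass-Serre tree has an edge separating $V_1$ from $V_2$, and this edge yields a nonempty clopen partition of $\mathcal{D}_{\underline{H}}$ with the image of $\bdry H_1$ on one side and the image of $\bdry H_2$ (and every conjugate's boundary, by equivariance of the partition under the vertex group actions) on the other. Since a clopen set is a union of connected components, the component containing the image of $\bdry H_1$ cannot meet the image of any conjugate of $H_2$. If instead $H_1$ and $H_2$ lie in the \emph{same} vertex group $V$, I would apply the previous factor proposition \emph{inside} $V$: independence as I read it still requires them to be separable by the graph-of-groups structure, so the same clopen-separation argument applies; the subtle point to check is that ``independent'' as defined forces the two subgroups into genuinely separated positions rather than merely coexisting in one vertex group.

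For the converse, suppose the component $K$ of $\mathcal{D}_{\underline{H}}$ containing the image of $\bdry H_1$ is disjoint from the image of the boundary of every conjugate of $H_2$. The strategy is to produce the separating splitting by finding a clopen partition of $\mathcal{D}_{\underline{H}}$ isolating $K$. Because $\mathcal{D}_{\underline{H}}$ is the boundary of a cusped space and $G$ is virtually free, the decomposition space is compact, and I expect $K$ to be clopen, or at least to be the intersection of a nested family of clopen sets, so that one of them separates $H_1$ from all conjugates of $H_2$. Invoking the Bowditch analogue of Stallings' Theorem on this clopen partition yields a splitting of $G$ over a finite group, relative to $\underline{H}$, in which $H_1$ and $H_2$ are conjugate into distinct vertex groups. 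One then refines this splitting, as in \fullref{proposition:factorimpliestotdisc}, to arrange that $H_1$ and $H_2$ are themselves the relevant vertex groups (using that a factor of a factor is a factor), exhibiting them as independent factors.

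The main obstacle I anticipate is the converse, specifically promoting the assumed \emph{component-level} disjointness into an honest \emph{clopen} separation. Connected components of a compact space need not be open, so disjointness of $K$ from the images of the $H_2$-conjugates does not immediately hand me a clopen set; I would need to use quasi-convexity and almost malnormality—via the structure of $\mathcal{D}_{\underline{H}}$ as a decomposition space whose non-singleton points come only from the (totally disconnected, by \fullref{proposition:factorimpliestotdisc}-type reasoning) boundaries of conjugates—to argue that components here behave well enough to be separated by clopen sets. Controlling the potentially infinite family of conjugates of $H_2$ uniformly, and ensuring a single clopen set avoids all of them at once, is the delicate compactness point where the argument must be carried out carefully.
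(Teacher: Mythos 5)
Your converse---the substantive direction---rests on a step that is not merely delicate but impossible: in general there is \emph{no} clopen subset of $\mathcal{D}_{\underline{H}}$ containing the component $K$ of the image of $\bdry H_1$ and avoiding the images of the boundaries of all conjugates of $H_2$, even when the proposition's conclusion holds. Take $G=\langle a,b\rangle * \langle c,d\rangle$, $H_1=\langle a^2b^2\rangle$, $H_2=\langle c^2d^2\rangle$. These lie in the independent factors $\langle a,b\rangle$ and $\langle c,d\rangle$, so the hypothesis on components holds; but $a^2b^2$ lies in no proper free factor of $\langle a,b\rangle$, so the image of $\bdry\langle a,b\rangle$ is connected and hence contained in $K$; in particular $[a^\infty]\in K$. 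The points $[a^n(c^2d^2)^\infty]$, which are images of boundaries of the conjugates $a^nH_2\inv{a}^n$, converge to $[a^\infty]$, so \emph{every} open set containing $K$ contains infinitely many of them. Thus component-level disjointness cannot be promoted to a clopen separation from all conjugates at once; no appeal to compactness (components being intersections of clopen sets) will produce the set you want, and your plan to invoke Bowditch's relative Stallings theorem on that partition never gets off the ground. The same misconception appears in your ``only if'' direction: a single Bass--Serre edge does not put every conjugate of $H_2$ on the opposite side from $\bdry H_1$ (in the example above, $aH_2\inv{a}$ sits on the $\langle a,b\rangle$ side of the edge); the easy direction instead requires, for each conjugate separately, an edge on the tree path between the two relevant vertices. (Also, $G$ is only assumed hyperbolic, not virtually free, though compactness of $\mathcal{D}_{\underline{H}}$ holds regardless.)

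The paper's proof never attempts to separate $H_1$ from all conjugates of $H_2$ simultaneously. It sets $\hat{H}_i$ equal to the smallest factor of $G$ containing $H_i$, shows that the image of $\bdry\hat{H}_i$ is itself a connected component of $\mathcal{D}_{\underline{H}}$, and uses the hypothesis to conclude that $\{\hat{H}_1,\hat{H}_2\}$ is an almost malnormal collection whose decomposition space is disconnected. It then passes to a \emph{maximal} splitting of $G$ over finite groups relative to $\{\hat{H}_1,\hat{H}_2\}$: the hypothesis forces $\hat{H}_1$ and $\hat{H}_2$, hence $H_1$ and $H_2$, into different vertex groups of this one splitting, which is exactly independence. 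This is the idea missing from your attempt: the component hypothesis must be exploited through connectedness of the pieces of a maximal relative splitting (equivalently, of minimal factors), handling one conjugate at a time, rather than through a single global clopen separation.
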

\begin{proof}
  The ``only if'' direction is easy. For the converse, for each $i$
  let $\hat{H}_i$ be the smallest factor containing $H_i$.
The image of $\bdry\hat{H}_i$ is a connected component of
$\mathcal{D}_{\underline{H}}$.
The hypothesis then implies that $\{\hat{H}_1,\hat{H}_2\}$ is an
almost malnormal collection whose decomposition space is not
connected.
Pass to a maximal graph of groups splitting of $G$ over finite groups
relative to $\{\hat{H}_1,\hat{H}_2\}$.
The hypothesis implies that $\hat{H}_1$ and $\hat{H}_2$, hence $H_1$
and $H_2$, are conjugate into different vertex groups of this splitting.
\end{proof}

\subsection{Virtually Cyclic Factors of Virtually Free Groups}
In this section let $H$ be an almost malnormal, virtually cyclic subgroup of a finitely
generated virtually free group $G$, and let $F$ be a finite index normal
free subgroup of $G$.
We relate connectivity of
$\mathcal{D}_H$ to connectivity of Whitehead graphs.

Choose representatives $g_i$ so that $G=\coprod
Fg_i$.
The map $\inv{\iota}\from G\to F\from fg_i\mapsto f$ is a
quasi-isometry inverse to the inclusion $\iota\from F \hookrightarrow G$.
Let $\left<w\right>=F\cap H$. 
This is a maximal cyclic subgroup of $F$ since $H$ is almost malnormal.
Let $d_i$ be double coset
representatives of $F\backslash G/\left<w\right>$.
Let $\multimot=\{d_iw\inv{d_i}\}$.

\begin{definition}
  The multiword $\multimot=\{d_iw\inv{d_i}\}$ above is \emph{a lift of
    $H$ to $F$}.
\end{definition}

For every $g\in G$ there exist $f\in F$, $g_i$, $d_j$, and $f'\in F$
such that $g=fg_i\in f'd_j\left<w\right>$.
Thus, $\inv{\iota}$ coarsely takes each $G$--conjugate of $H$ to an
$F$--conjugate of some $\left<d_jw\inv{d_j}\right>$:
\begin{equation}
\inv{\iota}(gH\inv{g})=\inv{\iota}(fg_iH\inv{g_i}\inv{f})\stackrel{c}{=}\inv{\iota}(fg_i\left<w\right>\inv{g_i}\inv{f})=\inv{\iota}(f'd_j\left<w\right>\inv{d_j}\inv{f'})=f'\left<d_jw\inv{d_j}\right>\inv{f'}\label{eq}
\end{equation}

(The second equivalence is coarsely true.)
It follows that $\mathcal{D}_H$ is homeomorphic to the decomposition
space of the boundary of $F$ obtained from the almost malnormal
collection $\{\left<d_iw\inv{d_i}\right>\}$, which we shall denote by
$\mathcal{D}_{\multimot}$.
Thus, to decide if $\mathcal{D}_H$ is totally disconnected we can lift
the problem to $F$ and consider $\mathcal{D}_{\multimot}$.

\begin{remark}
 We took $F$ to be normal so that $\multimot$ would have a nice form,
 but lifting to any finite index subgroup gives a homeomorphism of
 decomposition spaces.
\end{remark}

\begin{lemma}\label{lemma:basic}
  Let $\multimot$ be a multiword in a free group whose elements generate distinct
  conjugacy classes of maximal cyclic
  subgroups. The following are equivalent:
  \begin{enumerate}
\item $\multimot$ is basic.\label{item:basic}
\item Some minimal Whitehead graph for $\multimot$ consists of
  isolated edges.\label{item:somewh}
\item Every minimal Whitehead graph for $\multimot$ consists of
  isolated edges.\label{item:everywh}
  \item $\mathcal{D}_{\multimot}$ is totally disconnected.\label{item:totdisc}
  \end{enumerate}
\end{lemma}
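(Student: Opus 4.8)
The plan is to first dispense with the three combinatorial conditions (\ref{item:basic})--(\ref{item:everywh}) by an edge-counting argument, and then to connect them to the topological condition (\ref{item:totdisc}) using the factor characterizations of the previous subsection together with the detour machinery. I would begin by replacing each element of $\multimot$ by an indivisible, cyclically reduced representative of its conjugacy class; since the elements generate distinct conjugacy classes of maximal cyclic subgroups, these representatives are distinct up to inversion, and the set $\underline{v}$ used to define $\Wh_\mathcal{B}(*)\{\multimot\}$ is just $\multimot$ itself. The number of edges of $\Wh_\mathcal{B}(*)\{\multimot\}$ is $\sum_i |[w_i]|_\mathcal{B}$, so Whitehead's Algorithm minimizes this sum; the minimum is at least $k=|\multimot|$, with equality exactly when every $w_i$ has cyclic length one, that is, is a basis element, which is precisely the condition that $\multimot$ is basic. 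By \fullref{lemma:onecomponentperword} each word contributes to a single component, and by \fullref{lemma:loop} each component is either an isolated edge (one edge) or $2$--connected (at least two edges); hence a minimal Whitehead graph has exactly $k$ edges if and only if it consists of isolated edges. As the edge count is an invariant of the minimal graph, ``some minimal'' and ``every minimal'' coincide, giving (\ref{item:basic})$\Leftrightarrow$(\ref{item:somewh})$\Leftrightarrow$(\ref{item:everywh}).

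For (\ref{item:basic})$\Rightarrow$(\ref{item:totdisc}) I would note that a basic multiword realizes $\{\left<w_i\right>\}$, up to conjugacy, as the nontrivial vertex groups of a free splitting $F=\left<w_1\right>*\cdots*\left<w_k\right>*F'$. These are maximal cyclic with distinct axes, hence an almost malnormal collection, and they are independent factors. The easy (``only if'') direction of \fullref{proposition:factorimpliestotdisc} then applies in the collection setting: the components of $\mathcal{D}_{\multimot}$ are the singletons coming from the individual conjugates of the $\left<w_i\right>$ and from the ends of the Bass-Serre tree of this splitting, so $\mathcal{D}_{\multimot}$ is totally disconnected.

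The remaining and most delicate implication is (\ref{item:totdisc})$\Rightarrow$(\ref{item:basic}), which I would prove in contrapositive form. If $\multimot$ is not basic then, by the equivalence already shown, a minimal $\Wh_\mathcal{B}(*)\{\multimot\}$ has a nontrivial $2$--connected component, so by \fullref{detour} the generalized graph $\Wh_\mathcal{B}(\X)\{\multimot\}$ has \2cc\ over every compact subtree $\X\subset\tree$. A finite subtree partitions $\bdry F$ into clopen shadows indexed by the directions out of $\X$, and an edge of $\Wh_\mathcal{B}(\X)\{\multimot\}$ joining two directions records a line of the pattern whose two endpoints are identified in $\mathcal{D}_{\multimot}$. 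The plan is then to run the detour construction from the proof of \fullref{theorem:SSS} at the level of the boundary: starting from the axis of some $w_i$ in the $2$--connected component, the absence of cut vertices over every ball lets me build chains of pattern-lines connecting shadows arbitrarily close to the two ends of that axis while avoiding arbitrarily large balls about the basepoint. Passing to the quotient, these chains show that the image of $\bdry\left<w_i\right>$ cannot be separated by any clopen set from the points these chains approach, so its connected component in $\mathcal{D}_{\multimot}$ is not a singleton, and $\mathcal{D}_{\multimot}$ fails to be totally disconnected.

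The hard part will be making this last limiting argument precise: turning the persistent $2$--connectivity of the Whitehead graphs over every ball into an honest obstruction to separating points of the decomposition space. This is exactly the boundary analogue of the bottleneck detours used for \fullref{theorem:SSS}, and, as there, the care lies in ensuring that the successive pushes can always be made to avoid an arbitrarily large neighborhood of the basepoint while remaining within the line pattern tracked by $\Wh_\mathcal{B}(\X)\{\multimot\}$.
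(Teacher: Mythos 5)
Your handling of (\ref{item:basic})$\Leftrightarrow$(\ref{item:somewh})$\Leftrightarrow$(\ref{item:everywh}) is correct (the edge-counting argument via $\sum_i|[w_i]|_{\mathcal{B}}$ is a fine way to make precise what the paper dismisses as ``easy'' from Whitehead's Algorithm), and your proof of (\ref{item:basic})$\Rightarrow$(\ref{item:totdisc}) is exactly the paper's: apply the Bass--Serre tree argument of \fullref{proposition:factorimpliestotdisc} to the free splitting determined by a basic multiword. The genuine gap is in (\ref{item:totdisc})$\Rightarrow$(\ref{item:everywh}), which is the substance of the lemma, and you have flagged it yourself: the ``hard part'' you defer is not a detail but the entire content of the implication. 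Concretely, ``chains of pattern-lines avoiding arbitrarily large balls'' is the bottleneck-defeating criterion of \fullref{notaqtree}; it obstructs being a quasi-tree, but it is not, by itself, an obstruction to clopen separation in $\mathcal{D}_{\multimot}$. To get that, you need (i) the correspondence you state between saturated clopen partitions of $\bdry F$ and finite subtrees $\X$ whose generalized Whitehead graph components are monochromatic, and then (ii) a compactness/inverse-limit argument producing an actual second boundary point $q\neq w_i^{\pm\infty}$ whose direction out of \emph{every} ball lies in the same component of the generalized Whitehead graph as the $w_i$--directions, combined with the fact that in a compact Hausdorff space connected components coincide with quasi-components (which in turn requires knowing $\mathcal{D}_{\multimot}$ is compact Hausdorff, i.e.\ that the decomposition is upper semicontinuous). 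None of this is carried out in your proposal, and the limit step in particular requires care: 2--connectivity over each ball gives extra directions at each finite stage, but one must check these nest so as to converge to a point distinct from the two ends of the axis.

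The paper avoids all of this by a different move: since some minimal Whitehead graph has a component with more than one edge, it passes to the free factor spanned by that component (so the Whitehead graph becomes connected with no cut vertices, and the factor has rank at least two), and then invokes \cite[Theorem 4.1]{CasMac11}, which says precisely that such a line pattern has connected decomposition space with more than one point; since that decomposition space sits inside $\mathcal{D}_{\multimot}$, total disconnectedness fails. What you propose amounts to reproving that cited theorem from scratch. Your outline points in a workable direction --- it is essentially how such connectivity statements are proved --- but as written the proposal stops exactly where the real work begins, so it does not constitute a proof. Either complete the limiting argument in (ii), or follow the paper and quote the Cashen--Macura result.
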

\begin{proof}
Using Whitehead's Algorithm, the equivalence of (\ref{item:basic}), (\ref{item:somewh}), and
(\ref{item:everywh}) is easy.

  If $\multimot$ is basic we may take a graph of groups decomposition
  of $F$ with finite edge groups
  whose  cyclic vertex groups are generated by conjugates of the words in
  $\multimot$.
The same argument as \fullref{proposition:factorimpliestotdisc} shows
that $\mathcal{D}_{\multimot}$ is totally disconnected.
Thus, (\ref{item:basic}) implies (\ref{item:totdisc}).

Suppose (\ref{item:everywh}) is false, so that some minimal Whitehead
graph has a component containing more than one edge.
By passing to a free factor we may
assume that the Whitehead graph is connected.
Since it has \2cc, this is not a rank one factor.
It follows (see, for example, \cite[Theorem 4.1]{CasMac11}) that the decomposition space of the factor is
connected and not a single point, so $\mathcal{D}_{\multimot}$ is not
totally disconnected.
Thus, (\ref{item:totdisc}) implies (\ref{item:everywh}).
\end{proof}

\begin{lemma}\label{corollary:detour}
Let $\multimot=\{d_iw\inv{d_i}\}$ as above be
a lift of $H$ to $F$.
The following are equivalent:
\begin{enumerate}
\item $H$ is a factor of $G$.
\item $\multimot\subset F$ is basic.
\item Every minimal Whitehead graph of $\multimot$ consists of
  isolated edges.
\item Some minimal Whitehead graph of $\multimot$ contains an isolated edge.
\end{enumerate}

The alternative is that every minimal Whitehead graph of $\multimot$ has 2-connected components.
\end{lemma}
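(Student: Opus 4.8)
The plan is to obtain the equivalences (1) $\iff$ (2) $\iff$ (3) almost immediately from the machinery already in place, and then to isolate the one genuinely new ingredient needed for (4) and the concluding dichotomy. By hypothesis $H$ is an infinite, finitely generated (being virtually cyclic), almost malnormal subgroup of $G$, so \fullref{proposition:factorimpliestotdisc} says $H$ is a factor of $G$ if and only if $\mathcal{D}_H$ is totally disconnected, and the discussion ending in \fullref{eq} identifies $\mathcal{D}_H$ with $\mathcal{D}_{\multimot}$. I would first check that $\multimot=\{d_iw\inv{d_i}\}$ satisfies the hypothesis of \fullref{lemma:basic}: since $\left<w\right>$ is maximal cyclic in $F$ and $F$ is normal in $G$, each $d_iw\inv{d_i}$ lies in $F$ and generates a maximal cyclic subgroup, and after discarding words that generate the same $F$--conjugacy class we are left with distinct conjugacy classes of maximal cyclic subgroups. \fullref{lemma:basic} then gives that $\mathcal{D}_{\multimot}$ is totally disconnected if and only if $\multimot$ is basic if and only if every minimal Whitehead graph of $\multimot$ consists of isolated edges; chaining these yields (1) $\iff$ (2) $\iff$ (3).

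The crux is (4) together with the final dichotomy, and here the fact that $\multimot$ is a lift of a \emph{single} subgroup $H$ is what does the work. For an arbitrary multiword one word could be basic while another is not, so that a minimal Whitehead graph contains an isolated edge without consisting of isolated edges; the point is that this mixed behaviour is impossible for a lift. To see this I would invoke a homogeneity principle: the conjugates $gH\inv{g}$ form a single $G$--orbit, and $G$ acts on $\mathcal{D}_H$ by homeomorphisms, carrying the point coming from $H$ to the point coming from each conjugate. Hence the connected component of $\mathcal{D}_H$ containing the point of $H$ is a singleton if and only if the component containing the point of \emph{every} conjugate is a singleton. Carrying this across the homeomorphism $\mathcal{D}_H\cong\mathcal{D}_{\multimot}$, under which these points correspond to the $F$--conjugates of the various $\left<v_j\right>$, I conclude that the points attached to the distinct words $v_j$ all have the same component type. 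By \fullref{lemma:onecomponentperword} each $v_j$ contributes to a single component of a minimal Whitehead graph, and by the analysis inside \fullref{lemma:basic} (via \cite{CasMac11}) that component is an isolated edge, producing a singleton, exactly when $v_j$ is basic, and is 2--connected, producing a non-degenerate connected piece, otherwise. Homogeneity therefore forces the $v_j$ to be simultaneously all basic or all non-basic.

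With this the rest is formal. If some minimal Whitehead graph contains an isolated edge then some $v_j$ is basic, so by homogeneity every $v_j$ is basic and $\multimot$ is basic, giving (4) $\Rightarrow$ (2); conversely (2) $\Rightarrow$ (4) is immediate since $w\neq 1$ forces at least one edge, which is then isolated. In the remaining case $\multimot$ is not basic, so no $v_j$ is basic, and by \fullref{lemma:loop} every component of every minimal Whitehead graph is 2--connected, i.e.\ every minimal Whitehead graph has \2cc. I expect the main obstacle to be making the homogeneity step precise: one must match the single $G$--orbit of conjugates of $H$ with the points of $\mathcal{D}_{\multimot}$, and hence with the words $v_j$, carefully enough to transfer the component-type information; everything else is a direct appeal to the preceding results.
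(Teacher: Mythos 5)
Your overall skeleton matches the paper's: you get (1)$\iff$(2)$\iff$(3) from \fullref{proposition:factorimpliestotdisc}, the homeomorphism $\mathcal{D}_H\cong\mathcal{D}_{\multimot}$ coming from \fullref{eq}, and \fullref{lemma:basic}; and for (4) you use a homogeneity argument transferring the ``singleton component'' property from one pattern point of the decomposition space to all of them. The paper makes exactly this move, phrased as $\Aut(F)$ acting transitively on $\multimot$ and by homeomorphisms on $\mathcal{D}_{\multimot}$ (these automorphisms are induced by $G$--conjugation, $F$ being normal), where you phrase it via the $G$--action on $\mathcal{D}_H$; these are the same idea. Your side remark that one should discard repeated $F$--conjugacy classes before quoting \fullref{lemma:basic} is a legitimate point of care which the paper glosses over.

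However, the dictionary you use to translate homogeneity back into Whitehead-graph language is wrong as stated, and the error sits exactly at the step you identify as the crux. You claim the component of a minimal Whitehead graph containing the edges of $v_j$ ``is an isolated edge \dots exactly when $v_j$ is basic.'' The forward implication is fine, but the converse is false: take $\multimot=\{ab,\inv{a}b\}$ in $F_2=\left<a,b\right>$. Each word is individually basic, yet the minimal Whitehead graph is a single $4$--cycle (one component containing both words' edges, no isolated edge); indeed the multiword is not basic, since the abelianized images $(1,1)$ and $(-1,1)$ generate an index-two sublattice of $\mathbb{Z}^2$. The same example shows that your concluding inference ``every $v_j$ is basic and $\multimot$ is basic'' is a non sequitur: individually basic words need not form a basic multiword. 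The correct dictionary, which is what the proof of \fullref{lemma:basic} via \cite{CasMac11} together with \fullref{lemma:onecomponentperword} actually yields, is: the component of $\mathcal{D}_{\multimot}$ containing the point of $v_j$ is a singleton if and only if the Whitehead component of $v_j$ is an isolated edge (equivalently, $v_j$ is basic \emph{and} splits off as a free factor independent of the rest of the pattern). The repair is local: homogeneity gives that \emph{every} pattern point has singleton component; by the corrected dictionary every component of the minimal Whitehead graph is then an isolated edge, which is statement (3), and (2) follows from \fullref{lemma:basic} rather than from individual basicness; the dichotomy then follows from \fullref{lemma:loop} as you say. This is in substance what the paper does, concluding instead that $\mathcal{D}_{\multimot}$ is totally disconnected and then quoting \fullref{lemma:basic}.
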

\begin{proof}
The alternative follows from \fullref{lemma:loop}.

Suppose some minimal Whitehead graph for $\multimot$ contains an
isolated edge.
Such an isolated edge would mean that for some
$i$ the point $(d_iw\inv{d}_i)^\infty$ is an
isolated point in $\mathcal{D}_{\multimot}$.
Since $\Aut(F)$ acts transitively on $\multimot$ and by homeomorphisms
on $\mathcal{D}_{\multimot}$, this would imply that $\mathcal{D}_{\multimot}$ is
totally disconnected.
By \fullref{lemma:basic}, this is equivalent to $\multimot$ being
basic and also to every minimal Whitehead graph consisting entirely of isolated
edges.
Furthermore, $\mathcal{D}_H$ and $\mathcal{D}_{\multimot}$ are
homeomorphic, and \fullref{proposition:factorimpliestotdisc} says that $H$ is a factor
if and only if $\mathcal{D}_H$ is totally disconnected.
\end{proof}

\begin{example}\label{ex}
  $G=\mathbb{Z}/2\mathbb{Z}*\mathbb{Z}/3\mathbb{Z}$ has no virtually
  cyclic factors.
\end{example}
\begin{proof}
Let  $G=\left<r,s\mid r^3=s^2=1\right>$.
There is a rank 2 normal free subgroup
$F=\left<srsr^2,sr^2sr\right>$, and
$G/F=\left<[sr]\right>=\mathbb{Z}/6\mathbb{Z}$.
The action of $sr$ on the abelianization of $F$
has orbits of size 3 on lines through the origin.
Thus, the words in the lift of any virtually cyclic group $H$ to $F$ are not contained in less than three distinct conjugacy
classes of maximal cyclic subgroups.
A basic multiword in $\mathbb{F}_2$ has words in at most two conjugacy
classes of maximal cyclic subgroup, so, by the previous lemma,  $H$ is not a factor.
\end{proof}


\section{Proof of \fullref{theorem:vfree}}

Let $G=A*_CB$ be an amalgamated product of virtually free groups
over a virtually cyclic group.

$\Comm_A(C)=\{a\in A\mid aC\inv{a}\cap C \text{ is finite index in
  both $C$ and $aC\inv{a}$}\}$ is the \emph{commensurator} of $C$ in
$A$. 
A theorem of Kapovich and Short \cite{KapSho96} says that an infinite,
quasi-convex subgroup of a hyperbolic group has finite index in its commensurator.
Since $C$ is virtually cyclic,  so is $\Comm_A(C)$, and $\Comm_A(C)=\{a\in A\mid
|aC\inv{a}\cap C|=\infty\}$.
Thus, $\Comm_A(C)$ is the smallest almost malnormal subgroup of $A$
containing $C$.

Choose a finite index normal free subgroup $A'$ of $A$.
Let $\left<w\right>=A'\cap \Comm_A(C)$, and let $\multimot_A=\{d_iw\inv{d_i}\}$ be a lift of
$\Comm_A(C)$ to $A'$. 
Choose a basis for $A'$ with respect to which
$\multimot_A$ is Whitehead minimal.
After making similar choices for $B$, let $X'$ be the coarse
Bass-Serre complex for $G$ described in \fullref{sec:model}.

The number of edge strips attaching to a given conjugate of a $\left<d_iw\inv{d_i}\right>$ in $A'$ is equal
to the index of $C$ in $\Comm_A(C)$.

$X'$ is a tree of trees glued together along bi-infinite, width 1 edge
strips just as in the torsion free case, and we repeat the previous
argument to show
that $X'$ is not a quasi-tree if, for each line in $A'$ and $B'$ to
which an edge strip attaches, either
\begin{itemize}
\item there is a second edge strip attached to that same line, or
\item we can follow different edge strips to detour around an
  arbitrarily large ball centered on that line.
\end{itemize}

Now suppose $C$ is not a factor of $A$. 
It could be that $C$ is not almost malnormal in $A$, in which case the first
condition above is satisfied for $A$.
If $C$ is almost malnormal and not a factor of $A$ then by
\fullref{corollary:detour} every minimal Whitehead graph for a lift
of $C=\Comm_A(C)$ to $A'$ has \2cc.
This gives us the second condition.

Thus, if $C$ is a factor of neither $A$ nor $B$ then $X'$ is not a
quasi-tree, so $A*_CB$ is not virtually free.

\medskip

The $G=A*_C$ case follows by making similar adjustments to the torsion
free HNN case.
The interesting case is when the images
$C_1$ and $C_2$ of
$C$ in $A$ form an almost malnormal collection.
\fullref{proposition:independentfactors} shows that if 
$C_1$ and $C_2$ are not, up to conjugation, contained in independent
factors, then the images of $\bdry C_1$ and some $\bdry gC_2\inv{g}$ are contained
in a common component of $\mathcal{D}_{\{C_1,C_2\}}$.
Since $\{C_1,C_2\}$ is an almost malnormal collection, this component
is not a singleton, so $\mathcal{D}_{\{C_1,C_2\}}$ is not totally disconnected.
It follows that a minimal Whitehead graph for a lift of $\{C_1,C_2\}$ to a finite index normal subgroup of
$A$ will have 2-connected components, so $G$ is not virtually free.

\vspace{-1em}


\bibliographystyle{hyperamsplain}
\bibliography{masterbib}
\end{document}